\newcommand\Vol{{\operatorname{Vol}}}
\newcommand\rank{{\operatorname{rank}}}
\newcommand\R{{\mathbf{R}}}
\renewcommand\P{{\mathbf{P}}}
\newcommand\E{{\mathbf{E}}}
\newcommand\Z{{\mathbf{Z}}}
\newcommand\row{{\mathbf{r}}}
\newcommand\Ba{{\mathbf a}}
\newcommand\Bu{{\mathbf u}}
\newcommand\Bv{{\mathbf v}}
\newcommand\Bw{{\mathbf w}}
\newcommand\Bx{{\mathbf x}}
\newcommand\By{{\mathbf y}}
\newcommand\Bz{{\mathbf z}}
\newcommand\ep{\epsilon}
\theoremstyle{plain}
  \newtheorem{theorem}[subsection]{Theorem}
  \newtheorem{lemma}[subsection]{Lemma}
  \newtheorem{example}[subsection]{Example}
  \newtheorem{condition}{Condition}
\theoremstyle{remark}
\theoremstyle{definition}
\begin{document}

\title[Inverse Littlewood-Offord problem for quadratic forms]{A continuous variant of the inverse Littlewood-Offord problem for quadratic forms}

\author{Hoi H. Nguyen}
\address{Department of Mathematics, University of Pennsylvania, 209 South 33rd Street, Philadelphia, PA 19104, USA}
\email{hoing@math.upenn.edu}
%\subjclass[2010]{11B25,60C05}

\maketitle

\begin{abstract}
Motivated by the inverse Littlewood-Offord problem for linear forms, we study the concentration of quadratic forms. We show that if this form concentrates on a small ball with high probability, then the coefficients can be approximated by a sum of additive and algebraic structures.   
\end{abstract}

\section{Introduction}\label{section:introduction}
\subsection{The Littlewood-Offord problem for linear forms} 
Let $\xi$ be a real random variable, and let $A=\{a_1,\dots,a_n\}$ be a multiset in $\R^d$. For any $\beta>0$, we define the {\it small ball probability} as

$$\rho_{\beta,\xi}(A):=\sup_{a\in \R^d} \P_{\Bx}\big(a_1x_1+\dots+a_nx_n \in B(a,\beta)\big),$$

where $\Bx=(x_1,\dots, x_n)$ and $x_i$ are iid copies of $\xi$, and $B(x,\beta)$ denotes the closed disk of radius $\beta$ centered at $x$ in $\R^d$. 

A classical result of Erd\H{o}s \cite{E} and Littlewood-Offord \cite{LO} asserts that if $\xi$ has Bernoulli distribution and $a_i$ are real numbers of magnitude $|a_i|\ge \beta$, then 

$$\rho_{\beta,\xi}(A)= O(n^{-1/2}).$$ 

This remarkable inequality has generated an impressive way of research, particularly from the early 1960s to the late 1980s. We refer the reader to \cite{Ess,H,Kle} and the references therein.

Motivated by inverse theorems from additive combinatorics (see \cite[Chapter 5]{TVbook}), Tao and Vu brought a new view to the problem: find the underlying reason as to why the small ball probability is large (say, polynomial in $n$). 

Typical examples of $A$, where $\rho_{\beta,\xi}$ is large, involve \emph{generalized arithmetic progressions} (GAPs), an important concept from additive combinatorics. 

A set $Q\subset \R^d$ is a \emph{GAP of rank $r$} if it can be expressed as in the form
$$Q= \{g_0+ k_1g_1 + \dots +k_r g_r| K_i \le k_i \le K_i' \hbox{ for all } 1 \leq i \leq r\}$$ for some $g_0,\ldots,g_r\in \R^d$, and some integers $K_1,\ldots,K_r,K'_1,\ldots,K'_r$.

\vskip .1in

It is convenient to think of $Q$ as the image of an integer box $B:= \{(x_1, \dots, x_r) \in \Z^r| K_i \le k_i\le K_i' \} $ under the linear map
$$\Phi: (x_1,\dots, x_r) \mapsto g_0+ x_1g_1 + \dots + x_r g_r. $$

The vectors $g_i$ are the \emph{generators } of $Q$, the numbers $K_i'$ and $K_i$ are the \emph{dimensions} of $Q$, and $\Vol(Q) := |B|$ is the \emph{volume} of $Q$. We say that $Q$ is \emph{proper} if this map is one to one, or equivalently if $|Q| = \Vol(Q)$.  For non-proper GAPs, we of course have $|Q| < \Vol(Q)$.
If $g_0=0$ and $-K_i=K_i'$ for all $i\ge 1$, we say that $Q$ is {\it symmetric}.

\vskip .1in

\begin{example}\label{example:linear}
Let $Q= \{\sum_{i=1}^r k_ig_i | -K_i \le k_i \le K_i\}$ be a proper symmetric GAP of rank $r=O(1)$ and size $N=n^{O(1)}$. Assume that $\xi$ has Bernoulli distribution, and for each $a_i$ there exists $q_i\in Q$ such that $\|a_i-q\|_2\le \delta$. 

Then, because the random sum $\sum_i q_ix_i$ takes value in the GAP $nQ:=\{\sum_{i=1}^r k_ig_i| -nK_i \le k_i \le nK_i\}$, and because $|nQ| \le n^r N=n^{O(1)}$, the pigeon-hole principle implies that $\sum_i q_ix_i$ takes some value in $nQ$ with probability $n^{-O(1)}$. Thus we have

\begin{equation}\label{bound2} 
\rho_{n\delta, \xi}(A)   = n^{-O(1)}.
\end{equation}
\end{example}

The above example shows that if $\xi$ has Bernoulli distribution and if $a_i$ are {\it close} to a $GAP$ of rank $O(1)$ and size $n^{O(1)}$, then $A$ has large small ball probability. 

It was shown (rather implicitly) by Tao and Vu in \cite{TVbull,TVinverse,TVcir,TVcomp} that these are essentially the only examples which have large small ball probability. An explicit version was given by Vu and the current author under the following condition.

\begin{condition}[Anti-concentration]\label{condition:space} There exist positive constants $0<c_1<c_2$ and $c_3$ such that
$$\P(c_1\le |\xi-\xi'|\le c_2) \ge c_3,$$
where $\xi'$ is an independent copy of $\xi$.
\end{condition} 

We note that Bernoulli random variables $\eta^{(\mu)}$ (which equal  $\pm 1$ with probability $\mu/2$ and $0$ with probability $1-\mu$), where the parameters $\mu$ are bounded away from 0, are clearly of this type. 

We say that a vector $a$ is $\delta$-close to a set $Q$ if there exists $q\in Q$ such that $\|a-q\|_2\le \delta$.

\begin{theorem}[Inverse Littlewood-Offord theorem for linear forms, \cite{NgV}]\label{theorem:ILOlinear:continuous} Let $0 <\ep < 1$ and $B>0$. Let 
$ \beta >0$ be a parameter that may depend on $n$. Suppose that $\sum_i\|a_i\|_2^2 =1$ and 
 
$$\rho:=\rho_{\beta,\xi}(A) \ge n^{-B},$$ 

where $x_i$ are iid copies of a random variable $\xi$ satisfying Condition \ref{condition:space}. Then, for any number $n'$ between $n^\ep$ and $n$, there exists a proper symmetric GAP $Q=\{\sum_{i=1}^r k_ig_i : |k_i|\le K_i \}$ such that

\begin{itemize}

\item At least $n-n'$ elements of $a_i$ are $\beta$-close to $Q$.

\vskip .1in

\item $Q$ has small rank, $r=O_{B,\ep}(1)$, and small size

$$|Q| \le \max \big(O_{B,\ep}(\frac{\rho^{-1}}{\sqrt{n'}}),1\big).$$

\vskip .1in

\item There is a non-zero integer $p=O_{B,\ep}(\sqrt{n'})$ such that all
 steps $g_i$ of $Q$ have the form  $g_i=(g_{i1},\dots,g_{id})$, where $g_{ij}=\beta\frac{p_{ij}} {p} $ with $p_{ij} \in \Z$ and $|p_{ij}|=O_{B,\ep}(\beta^{-1} \sqrt{n'}).$

\end{itemize}
\end{theorem}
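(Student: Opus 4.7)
The plan is to follow the Halász--Fourier method combined with a continuous inverse Freiman-type theorem.

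\emph{Step 1: the Fourier bound.} After symmetrizing (replace $\xi$ by $\xi-\xi'$), the Esseen inequality with Gaussian smoothing yields
$$\rho\lesssim\beta^d\int_{\|\Bt\|_2\le 1/\beta}\prod_{i=1}^n f(\Bt\cdot a_i)\,d\Bt,$$
where $f(s)=\E\cos(s(\xi-\xi'))\ge 0$. Condition~\ref{condition:space} gives a pointwise bound of the shape $f(s)\le\exp(-c\,\dist(c_0 s,\Z)^2)$ for some $c_0\in[c_1,c_2]$ and $c>0$ depending on $c_1,c_2,c_3$. Hence the integrand is at most $\exp\big(-c\sum_i\dist(c_0\Bt\cdot a_i,\Z)^2\big)$.

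\emph{Step 2: a level set with additive structure.} Setting $\alpha\asymp\log(\rho^{-1})$, I would define
$$T_\alpha:=\Big\{\Bt\in B(0,1/\beta):\sum_{i=1}^n\dist(c_0\Bt\cdot a_i,\Z)^2\le\alpha\Big\}.$$
Step~1 forces $|T_\alpha|\gtrsim\rho\beta^{-d}$. Using the convexity of $\dist(\cdot,\Z)^2$ together with Cauchy--Schwarz, one obtains $T_\alpha-T_\alpha\subset T_{4\alpha}$; a covering/dyadic argument then bounds $|T_{k\alpha}|\le (n/\sqrt{n'})^{O(k)}|T_\alpha|$, so that $T_\alpha$ behaves as an approximate subgroup.

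\emph{Step 3: continuous Freiman and duality.} Next I would invoke a quantitative continuous inverse-sumset theorem (Green--Ruzsa / Bogolyubov--Ruzsa, after discretization at scale $\beta/n$) to produce a proper symmetric GAP $P\subset\R^d$ of rank $r=O_{B,\eps}(1)$ and volume $\Vol(P)\lesssim_{B,\eps}\rho^{-1}\beta^{-d}/\sqrt{n'}$ containing $T_\alpha$. Since many $\Bt\in P$ satisfy the defining inequality of $T_\alpha$, averaging over such $\Bt$ forces all but at most $n'$ of the $a_i$ to lie $\beta$-close to the dual GAP $Q$ of $P$. The rational form $g_i=\beta p_{ij}/p$ of the generators of $Q$ and the numerical bounds on $p$ and $|p_{ij}|$ are read off from the lattice-point structure dual to $P$.

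The main obstacle is Step~3: one needs a continuous Freiman-type theorem whose constants are polynomial in the doubling constant, rather than tower-type as in classical Freiman. The standard workaround is to discretize $T_\alpha$ at a scale comparable to $\beta/n$ and apply Chang's refinement of Freiman's theorem in $\Z^d$, then transfer the GAP back to $\R^d$. Calibrating the discretization scale against the exceptional parameter $n'$ is exactly what produces the $O_{B,\eps}(1)$ rank bound and the $\max(\cdot,1)$ size bound appearing in the conclusion.
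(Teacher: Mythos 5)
This statement is not proved in the paper you are working from: Theorem~\ref{theorem:ILOlinear:continuous} is imported verbatim from the reference~\cite{NgV} (Nguyen--Vu, \emph{Optimal inverse Littlewood--Offord theorems}), and the text explicitly says the theorem ``was proven in \cite{NgV}'' and that the proof ``extends to the general case rather automatically.'' So there is no proof in this paper to compare yours against; I can only assess your sketch on its own merits and against what I recall of the cited argument.

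Your outline (Esseen/Hal\'asz Fourier bound, level set $T_\alpha$ of the log-characteristic function, approximate-group structure, continuous Freiman, dualization) is the textbook framework for inverse Littlewood--Offord theorems going back to Tao--Vu, and it is in the right family. But as written it has real gaps, concentrated exactly where you flag them. In Step~2 the claim $|T_{k\alpha}|\le(n/\sqrt{n'})^{O(k)}|T_\alpha|$ is asserted without argument; this is not a routine covering estimate but the crux of obtaining the optimal size $\rho^{-1}/\sqrt{n'}$, and it is precisely where the free parameter $n'$ must enter. In Step~3, appealing to a continuous Freiman/Bogolyubov--Ruzsa theorem and then ``reading off'' a dual GAP does not obviously produce the very specific rational form of the generators $g_{ij}=\beta\,p_{ij}/p$ with a \emph{common} denominator $p=O_{B,\ep}(\sqrt{n'})$ and numerators $|p_{ij}|=O_{B,\ep}(\beta^{-1}\sqrt{n'})$. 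That arithmetic normalization is one of the distinctive outputs of the Nguyen--Vu theorem, and it does not fall out of a generic Freiman-plus-duality argument. My recollection of \cite{NgV} is that the proof does not run through a continuous Freiman theorem at all: it discretizes the problem, reduces to a rational/integer setting, and then uses a discrete structural lemma together with a careful rational-approximation step, which is what yields both the common denominator $p$ and the $\sqrt{n'}$ gain. So your route is a plausible alternative strategy, but it is not a reconstruction of the cited proof, and the two hardest steps (doubling control of $T_\alpha$, and extracting the rational generator structure) remain open in your sketch.
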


In this and all subsequent theorems, the hidden constants could also depend on $d$ and $c_1,c_2,c_3$ of Condition \ref{condition:space}. We could have written $O_{d,c_1,c_2,c_3}(.)$ everywhere, but these notations are somewhat cumbersome, and this dependence is not our focus, so we omit them. Theorem \ref{theorem:ILOlinear:continuous} was proven in \cite{NgV} with $c_1=1,c_2=2$ and $c_3=1/2$, but the proof there extends to the general case rather automatically.

{\bf Notation.} Let $x_1,\dots,x_n$ be real numbers, and let $a_1,\dots,a_n$ be vectors in $\R^d$. To simplify our presentation, we will denote the sum vector $\sum_{i}a_ix_i$ by $\Ba \cdot \Bx$, or $\Bx \cdot \Ba$, where $\Bx=(x_1,\dots,x_n)$ and $\Ba=(a_1,\dots,a_n)$. For instance, the small ball probability can be expressed as

$$\rho_{\beta,\xi}(A)= \sup_{a}\P_{\Bx}\big(\Bx \cdot \Ba \in B(a,\beta)\big).$$

\subsection{The Littlewood-Offord problem for quadratic forms} Let $\xi$ be a real random variable, and let $A=(a_{ij})$ be an $n\times n$ {\it symmetric} matrix whose entries are vectors of $\R^d$. For any $\beta>0$, we define the {\it quadratic small ball probability} as

$$\rho_{\beta,\xi}(A):= \sup_{a,b_1,\dots,b_n \in \R^d}\P\big(\sum_{i,j} a_{ij}x_ix_j+ \sum_i b_ix_i\in B(a,\beta)\big).$$

where $x_1,\dots,x_n$ are iid copies of $\xi$.

It follows from \cite[Theorem 3.1]{RS} and \cite[Corollary 4.4]{CTV} that if $\xi$ has Bernoulli distribution and if there are $\Theta(n)$ indices $i$ for each of which there are $\Theta(n)$ indices $j$ such that $\|a_{ij}\|_2 \ge \beta$, then the following holds for some explicit constant $c>0$

\begin{equation}\label{eqn:quadratic:forward}
\rho_{\beta,\xi}(A) = O(n^{-c}).
\end{equation}

By using a recent result of Costello \cite{C}, one can improve the right hand side to $O(n^{-1/2+o(1)})$, which is asymptotically tight. 

It seems that one can improve the bound further by imposing new assumptions on $a_{ij}$. However, this is not our goal here. Motivated by the inverse Littewood-Offord problem for linear forms, we would like to find the underlying reason as to why the quadratic small ball probability is large (say, polynomial in $n$). 

In the following examples, $\xi$ has Bernoulli distribution, and for each $a_{ij}$ there exists $q_{ij}$ such that 

$$\|a_{ij}-q_{ij}\|_2\le \delta.$$ 

\vskip .1in

\begin{example}\label{example:quadratic:1} Let $Q$ be a proper symmetric GAP of rank $r=O(1)$ and size $n^{O(1)}$. Assume that the approximated values $q_{ij}$ belong to $Q$. 

Then, because the random sum $\sum_{i,j}q_{ij}x_ix_j$ takes value in the GAP $n^2Q$, and because the size of $n^2Q$ is $n^{O(1)}$, the pigeon-hole principle implies that $\sum_{i,j}q_{ij}x_ix_j$ takes some value in $n^2Q$ with probability $n^{-O(1)}$. Passing back to $a_{ij}$, we obtain

$$\rho_{n^2\delta,\xi}(A) =n^{-O(1)}.$$ 
  
\end{example}

One observes that this example is similar to Example \ref{example:linear}, in which case $q_{ij}$ have additive structure. However, unlike what we in the linear case, there are examples of different nature where the quadratic small ball probability can be large. 

\begin{example}\label{example:quadratic:2} Assume that $q_{ij}$ can be written as $q_{ij}=k_ib_j+k_jb_i$, where $b_i$ are arbitrary in $\R^d$ and $k_i$ are integers bounded by $n^{O(1)}$ such that

$$\P_\Bx(\sum_i k_ix_i= 0)=n^{-O(1)}.$$ 

Then, we have 

$$\P(\sum_{i,j}q_{ij}x_ix_j =\mathbf{0}) =\P(\sum_{i}k_ix_i  \sum_jb_jx_j =\mathbf{0})=n^{-O(1)}.$$

Passing back to $a_{ij}$, we obtain

$$\rho_{n^2\delta,\xi}(A) =n^{-O(1)}.$$ 

\end{example}

Motivated by \ref{example:quadratic:1} and \ref{example:quadratic:2}, we now consider a more complicated example. 

\begin{example}\label{example:quadratic:3}  Assume that $q_{ij}=q_{ij}' +q_{ij}''$, where $q_{ij}'\in Q$, a proper symmetric GAP of rank $O(1)$ and size $n^{O(1)}$, and $q_{ij}''= k_{i1}b_{1j}+k_{j1}b_{1i}+\dots+k_{ir}b_{rj}+k_{jr}b_{ri}$ , where $r=O(1)$, and $b_{1i},\dots, b_{ri}$ are arbitrary in $\R^d$, and $k_{i1},\dots,k_{ir}$ are integers bounded by $n^{O(1)}$ such that 

$$\P_{\Bx}\big(\sum_i k_{i1}x_i=0,\dots,\sum_i k_{ir}x_i=0\big)=n^{-O(1)}.$$ 

Observe that 

$$\sum_{i,j} q_{ij}x_ix_j = \sum_{i,j}q_{i,j}'x_ix_j + (\sum_{i} k_{i1}x_i)(\sum_{j} b_{1j}x_j)+\dots + (\sum_{i} k_{ir}x_i)(\sum_{j} b_{rj}x_j).$$ 

Thus,

$$\sup_{q\in n^2Q}\P_\Bx(\sum_{i,j}q_{ij}x_ix_j=q)=n^{-O(1)}.$$ 

Passing to $a_{ij}$, we obtain 

$$\rho_{n^2\delta,\xi}(A) =n^{-O(1)}.$$ 
\end{example}

In this example, the matrix $(q_{ij})$ is a sum of two unrelated submatrices $(q_{ij}')$ and $(q_{ij}'')$: one has entries belonging to a GAP of rank $O(1)$ and size $n^{O(1)}$, and one has rank $O(1)$.

Our main theorem partially demonstrates that if $\rho_{\beta,\xi}(A)$ is large, then $a_{ij}$ are close to some $q_{ij}$ taking the form of Example \ref{example:quadratic:3}.

We denote by $\row_i(A)$ the row $(a_{i1},\dots, a_{in})$ of $A$. 

\begin{theorem}[Inverse Littlewood-Offord theorem for quadratic forms]\label{theorem:ILOquadratic:continuous}
Let $0 <\ep < 1$ and $B>0$. Let $ \beta >0$ be a parameter that may depend on $n$. Assume that $a_{ij}=a_{ji}$, and 

$$\rho:=\rho_{\beta,\xi}(A)\ge n^{-B}.$$

Then, there exist an integer $k\neq 0, |k|=n^{O_{B,\ep}(1)}$, a set of $r=O(1)$ rows $\row_{i_1}, \dots, \row_{i_r}$ of $A$, and set $I$ of size at least $n-2n^\ep$ such that for each $i\in I$, there exist integers $k_{ii_1},\dots, k_{ii_r}$, all bounded by $n^{O_{B,\ep}(1)}$, such that the following holds.

\begin{equation}\label{eqn:ILOquadratic}
\P_\Bz\Big(\| \Bz \cdot (k\row_i(A) + \sum_{j} k_{ii_j} \row_{i_j}(A))\|_2 \le \beta n^{O_{B,\ep}(1)}\Big)\ge n^{-O_{B,\ep}(1)},
\end{equation}

where $\Bz=(z_1,\dots, z_n)$ and $z_i$ are iid copies of $\eta^{(1/2)} (\xi-\xi')$, where $\eta^{(1/2)}$ is a Bernoulli random variable of parameter $1/2$ which is independent of $\xi$ and $\xi'$.
\end{theorem}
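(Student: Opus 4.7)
The plan is to reduce the quadratic problem to the linear one via a standard decoupling argument, then apply Theorem~\ref{theorem:ILOlinear:continuous} together with a sequence of pigeonholes to extract the stable integer data asserted in the statement.

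\emph{Step 1 (Decoupling).} I first invoke a standard decoupling in the spirit of Hal\'asz and Costello--Tao--Vu. Expanding the quadratic form on independent copies of $\Bx$ and symmetrizing twice (once on each side) via $x_i\leadsto\eta_i^{(1/2)}(x_i-x_i')$ kills the linear contribution $\sum b_ix_i$ and leaves a bilinear form. The hypothesis $\rho_{\beta,\xi}(A)\ge n^{-B}$ upgrades, after constantly many such symmetrizations, to
$$\P\Big(\big\|\sum_{i,j}a_{ij}v_iv'_j\big\|_2\le C\beta\Big)\ge n^{-O_B(1)},$$
where $\Bv$ and $\Bv'$ are independent random vectors whose entries are iid copies of $\eta^{(1/2)}(\xi-\xi')$, matching the distribution of $\Bz$ in the theorem.

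\emph{Step 2 (Freeze one side, apply linear ILO).} Writing $\sum_{i,j}a_{ij}v_iv'_j=\sum_j v'_j\,c_j(\Bv)$ with $c_j(\Bv):=\row_j(A)\cdot\Bv\in\R^d$, Fubini produces a ``good'' set of $\Bv$'s of measure at least $n^{-O_B(1)}$ on which the linear form $\sum_j v'_jc_j(\Bv)$ has small ball probability at least $n^{-O_B(1)}$ at scale $C\beta$. Applying Theorem~\ref{theorem:ILOlinear:continuous} to the coefficient vector $(c_j(\Bv))_{j=1}^n$ then produces, for each good $\Bv$, a proper symmetric GAP $Q_\Bv$ of rank $r=O_{B,\ep}(1)$ and size $n^{O_{B,\ep}(1)}$, an offset $a(\Bv)\in\R^d$, and unique integer coefficients $(k_{js}(\Bv))_{s=1}^r$ of size $n^{O_{B,\ep}(1)}$ such that $c_j(\Bv)-a(\Bv)$ is $\beta$-close to $\sum_s k_{js}(\Bv)g_s(\Bv)$ for all but $n^\ep$ indices $j$. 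I remove the offset by passing to differences $c_j(\Bv)-c_{j_0}(\Bv)$ for a fixed reference index $j_0$.

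\emph{Step 3 (Pigeonhole to freeze the combinatorial data).} Among the close indices one can select $r$ representatives $i_1(\Bv),\dots,i_r(\Bv)$ whose coefficient vectors in $\Z^r$ form a nonsingular integer matrix $K(\Bv)$; otherwise the effective rank of $Q_\Bv$ is strictly less than $r$ and the argument iterates on a lower-rank GAP. Set $k(\Bv)=\det K(\Bv)\ne 0$, an integer of size $n^{O(1)}$. The ordered tuple $(i_1,\dots,i_r)$ ranges over $\binom{n}{r}=n^{O(1)}$ values, and both $k$ and the entries of $K$ lie in polynomially large integer sets; a single pigeonhole therefore fixes a choice $(i_1,\dots,i_r,K,k)$ realized on a subset $\mathcal G$ of good $\Bv$'s of relative measure $n^{-O(1)}$. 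Inverting $K$ over $\Q$, $kK^{-1}\in\M_r(\Z)$, whence $kg_t(\Bv)\approx\sum_s(kK^{-1})_{ts}c_{i_s}(\Bv)$ modulo $O(\beta n^{O(1)})$ for $\Bv\in\mathcal G$; substituting back, $kc_j(\Bv)$ becomes a fixed integer combination of $c_{i_1}(\Bv),\dots,c_{i_r}(\Bv)$ modulo the same error.

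\emph{Step 4 (Lift to rows).} A final pigeonhole over $j$, freezing the integer coefficient tuple $(k_{ji_1},\dots,k_{ji_r})$ (polynomially many possibilities), yields a set $I$ of size $\ge n-2n^\ep$ such that for each $i\in I$ there exist fixed integers $k_{ii_1},\dots,k_{ii_r}$ of magnitude $n^{O_{B,\ep}(1)}$ with
$$\P_\Bv\Big(\big\|(k\,\row_i(A)+\sum_s k_{ii_s}\row_{i_s}(A))\cdot\Bv\big\|_2\le\beta n^{O_{B,\ep}(1)}\Big)\ge n^{-O_{B,\ep}(1)};$$
absorbing $c_{j_0}$ into an extra representative accounts for Step~2's offset removal. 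Since $\Bv$ has iid entries distributed as $\eta^{(1/2)}(\xi-\xi')$, this is exactly the theorem with $\Bz=\Bv$. The main obstacle is Step~3: one must verify that the combinatorial data extracted from Theorem~\ref{theorem:ILOlinear:continuous} (representatives, invertible integer matrix, offset) can be stabilized to a single choice on a polynomially large fraction of $\Bv$'s, after which the remainder amounts to careful bookkeeping of polynomial losses.
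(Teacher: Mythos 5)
Your outline gets the broad two-stage structure right (decouple to a bilinear form, apply linear ILO, pigeonhole), and your Steps~2--4 are in spirit the same pigeonholing argument the paper uses to prove Theorem~\ref{theorem:ILObilinear:continuous}. But there is a genuine gap in Step~1 that propagates through the rest of your argument.

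You assert that symmetrizing $\sum_{i,j}a_{ij}x_ix_j$ directly yields a bilinear form $\sum_{i,j}a_{ij}v_iv_j'$ in two independent vectors $\Bv,\Bv'$ whose entries are already distributed as $\eta^{(1/2)}(\xi-\xi')$. This is not what decoupling gives you. If you symmetrize by replacing $x_i$ with $x_i-x_i'$ (or $\eta_i(x_i-x_i')$) in a quadratic form, the expansion still contains diagonal quadratic pieces like $\sum a_{ij}x_ix_j$ and $\sum a_{ij}x_i'x_j'$; these do not vanish. The standard decoupling (Lemma~\ref{lemma:decoupling} in the paper, following Costello--Tao--Vu) instead splits the index set by a \emph{fixed} subset $U\subset[n]$ and produces, for each $U$, a bound of the form
$$\P_{\Bv,\Bw}\Bigl(\bigl\|\sum_{i,j} A_U(ij)v_iw_j\bigr\|_2=O_B(\beta\sqrt{\log n})\Bigr)=\Theta(\rho^8),$$
where $A_U$ is the \emph{submatrix} of $A$ supported on the cross-terms between $U$ and its complement, and $v_i,w_j$ are iid copies of $\xi-\xi'$ (not $\eta^{(1/2)}(\xi-\xi')$). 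Consequently, after applying the bilinear inverse theorem, the linear combinations you obtain are combinations of rows of $A_U$, not of $A$, and they carry a $U$-dependence in all their combinatorial data ($I_0(U)$, $k(U)$, coefficients $k_{ii_0}(U)$). Converting statements about rows of $A_U$ back into statements about rows of $A$ is the real content of Section~\ref{section:ILOquadratic:proof}: one must pigeonhole over the $2^n$ choices of $U$ to stabilize the index set and coefficients, split $I_0$ according to its intersection with $U$, exploit that $\row_{i_0}(A_U)\cdot\By$ is $\sum_j a_{i_0j}u_jy_j$ or $\sum_j a_{i_0j}(1-u_j)y_j$ depending on whether $i_0\in U$, and finally average over $U$ (introducing $u_j-u_j'$, which is where $\eta^{(1/2)}$ actually enters) and apply Cauchy--Schwarz to land on the random variable $z_j=(u_j-u_j')y_j\sim\eta^{(1/2)}(\xi-\xi')$. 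None of this appears in your proposal, and without it you cannot pass from $A_U$ to $A$, nor obtain the stated distribution for $\Bz$.

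A smaller point: in Step~2 you introduce an offset $a(\Bv)$ and remove it by subtracting a reference coordinate $c_{j_0}(\Bv)$. Theorem~\ref{theorem:ILOlinear:continuous} already produces a \emph{symmetric} proper GAP (i.e.\ $g_0=0$), so there is no offset to remove; the subtraction is unnecessary and would slightly complicate the pigeonhole counting in Step~3.
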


It follows from \eqref{eqn:ILOquadratic} and from Theorem \ref{theorem:ILOlinear:continuous} that for each $i\in I$, most of the entries of $k\row_i(A) + \sum_{j} k_{ii_j} \row_{i_j}(A)$ are $\beta n^{O_{B,\ep}(1)}$-close to a symmetric GAP of rank $O(1)$ and size $n^{O(1)}$. In other words, Theorem \ref{theorem:ILOquadratic:continuous} asserts that, modulo some special linear combinations of $\row_{i_1}(A),\dots, \row_{i_r}(A)$ (where the coefficients are integers bounded by $n^{O(1)}$), most of the components of $\row_i(A)$ are $\beta n^{O(1)}$-close to a symmetric GAP of rank $O(1)$ and size $n^{O(1)}$. 

Theorem \ref{theorem:ILOquadratic:continuous} seems to be useful. It plays a crucial role in our work \cite{Ng-singular} of establishing polynomial bounds for the singular value of random symmetric matrices. We remark that a discrete version of Theorem \ref{theorem:ILOquadratic:continuous} was discussed in an earlier paper \cite{Ng}.

\section{A rank reduction argument and the full rank assumption}\label{appendix:fullrank}
This section, which is independent of its own, provides a technical lemma we will need for later sections. Informally, it says that if we can find a proper symmetric GAP that contains a given set, then we can assume this containment is non-degenerate.   

Assume that $P=\{k_1g_1+\dots+k_rg_r | -K_i\le k_i \le K_i\}$ is a proper symmetric GAP, which contains a set $U=\{u_1,\dots. u_n\}$. 

We consider $P$ together with the map $\Phi: P \rightarrow \R^r$ which maps $k_1g_1+\dots+k_rg_r$ to $(k_1,\dots,k_r)$. Because $P$ is proper, this map is bijective. 

We know that $P$ contains $U$, but we do not know yet that $U$ is non-degenerate in $P$ in the sense that the set $\Phi(U)$ has full rank in $\R^{r}$. In the later case, we say $U$ {\it spans} P.

\begin{theorem}\label{theorem:fullrank}
Assume that $U$ is a subset of a proper symmetric GAP $P$ of size $r$, then there exists a proper symmetric GAP $Q$ that contains $U$ such that the followings hold.

\begin{itemize}
\item $\rank(Q)\le r$ and $|Q|\le O_r(1)|P|$.

\vskip .1in

\item $U$ spans $Q$, that is, $\phi(U)$ has full rank in $\R^{\rank(Q)}$.
\end{itemize}

\end{theorem}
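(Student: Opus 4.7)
The plan is to argue by induction on the deficiency $r - r'$, where $r' := \rank_{\R}(\Phi(U))$ denotes the dimension of the real linear span of the coordinate images of $U$ inside $\R^r$. When $r' = r$, the set $U$ already spans $P$ in the required sense, so we take $Q := P$. Assume now that $r' < r$; we will build a proper symmetric GAP $Q_1$ of rank at most $r'$ containing $U$ with $|Q_1| \le O_r(|P|)$, and then invoke the induction hypothesis on the pair $(U, Q_1)$ to finish, since the ambient rank has strictly decreased.

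To construct $Q_1$, let $V$ be the real linear span of $\Phi(U)$ in $\R^r$, so $\dim V = r'$, and let $\Lambda := V \cap \Z^r$, a lattice of rank $r'$ inside $V$. Apply Minkowski's second theorem on successive minima to the symmetric convex body $B \cap V$ relative to $\Lambda$: there is a $\Z$-basis $w_1,\dots,w_{r'}$ of $\Lambda$ together with positive integers $M_1,\dots,M_{r'}$ such that every lattice point $x \in \Lambda \cap B$, expressed as $x = \sum_i a_i w_i$, satisfies $|a_i| \le M_i$, and such that
\[
\prod_{i=1}^{r'}(2 M_i + 1) \le O_r(|\Lambda \cap B|) \le O_r(|\Z^r \cap B|) = O_r(|P|),
\]
using that $\Phi$ is a bijection between $P$ and $B \cap \Z^r$. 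In particular, the bound applies to each coordinate vector $\Phi(u)$ for $u \in U$. Setting $h_i := \sum_{j=1}^{r}(w_i)_j g_j \in \R^d$, define
\[
Q_0 := \Bigl\{ \sum_{i=1}^{r'} a_i h_i \; : \; a_i \in \Z, \; |a_i| \le M_i \Bigr\}.
\]
Then $U \subset Q_0$, $\rank Q_0 \le r'$, and $|Q_0| \le O_r(|P|)$.

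The delicate point is that $Q_0$ may fail to be proper in $\R^d$: the underlying parallelepiped $\Pi := \{\sum_i a_i w_i : |a_i| \le M_i\} \subset \Z^r$ sits inside $O_r(1)\cdot B$ (by the Minkowski bounds $\|w_i\|_{B \cap V} = O_r(\lambda_i)$ and $M_i = O_r(1/\lambda_i)$, where $\lambda_i$ are the successive minima), whereas the properness of $P$ only guarantees that the kernel of the evaluation map $k \mapsto \sum_i k_i g_i$ avoids $2B$, and not the larger region $O_r(1)\cdot B$. To upgrade $Q_0$ to a proper GAP, we invoke the standard proper-containment lemma from additive combinatorics (see, e.g., \cite{TVbook}): any symmetric GAP of rank $\rho$ in a torsion-free abelian group is contained in a proper symmetric GAP of rank at most $\rho$ whose cardinality is within a multiplicative $O_\rho(1)$ factor of the original. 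Applying this to $Q_0$ produces the required $Q_1$, and induction on $r - r'$ closes the argument; the accumulated constants remain of the form $O_r(1)$ since all intermediate ranks are bounded by $r$.

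The main technical obstacle is exactly this reconciliation between the Minkowski reduction (which delivers the optimal size bound but only places $\Pi$ inside a constant inflation of $B$) and the properness requirement in the ambient group $\R^d$. The proper-containment lemma is what bridges this gap, at the cost of one further $O_r(1)$ constant.
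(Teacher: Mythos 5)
Your proof is correct, but it takes a genuinely different route from the paper's. The paper's argument is an elementary one-generator-at-a-time elimination: since $\Phi(U)$ lies in a hyperplane, there are coprime integers $\alpha_1,\dots,\alpha_r$ with $\sum_i \alpha_i k_i = 0$ on $\Phi(U)$; picking $w$ with $g_r = \alpha_r w$ and replacing $g_i$ by $g_i' = g_i - \alpha_i w$ for $i < r$ yields a rank-$(r-1)$ GAP of no larger volume that still contains $U$ (the $w$-terms telescope exactly because of the hyperplane relation). One then invokes Lemma~\ref{lemma:embeding} to restore properness and iterates, at most $r$ times. Your version instead passes directly to the lattice $\Lambda = V \cap \Z^r$ (where $V = \operatorname{span}\Phi(U)$) and uses Minkowski's second theorem / discrete John's theorem to drop the rank from $r$ to $r'$ in a single geometric step, then applies the same proper-containment lemma. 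This is conceptually tidy but invokes heavier geometry-of-numbers machinery where the paper gets by with linear algebra; and since the properness fix can disturb the spanning property, you still end up iterating, so the one-shot rank drop doesn't save much. One small point worth tightening: you frame the induction as being on the deficiency $r - r'$, but the quantity that demonstrably strictly decreases after each round (and hence the correct well-founded parameter, as you yourself note parenthetically) is the ambient rank $\rank Q_1 \le r' < r$, not the deficiency of the new pair $(U,Q_1)$, which need not shrink. Stating the induction on the ambient rank, as the paper does, avoids this ambiguity.
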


To prove Theorem \ref{theorem:fullrank}, we will rely on the following lemma.

\begin{lemma}[Progressions lie inside proper progressions, \cite{TVbook}]\label{lemma:embeding}
There is an absolute constant $C$ depending in $d$ such that the following holds. Let $P$ be a GAP of rank $r$ in $\R^d$. Then there is a symmetric proper GAP $Q$ of rank at most $r$ containng $P$ and 

$$|Q|\le r^{Cr^3}|P|.$$ 

\end{lemma}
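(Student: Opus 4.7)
The plan is to reduce the problem to the case where $\Phi(U)$ already has full rank in $\R^r$, via an iterated rank-reduction procedure. If $r' := \rank_{\R}(\Phi(U)) = r$, then I would simply take $Q = P$ and be done. Otherwise, I would construct a proper symmetric GAP $Q^{(1)} \supset U$ of rank strictly less than $r$ and size at most $O_r(|P|)$, and then apply the same procedure to $U \subset Q^{(1)}$. Since the rank of the ambient GAP strictly decreases each stage (unless $U$ already spans it), this process terminates after at most $r$ iterations, and the size-factor blowups compose into a single $O_r(1)$ overhead.

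The core construction runs as follows. Let $V \subset \R^r$ denote the $\R$-span of $\Phi(U)$, of dimension $r'$, and let $\Lambda := V \cap \Z^r$, a primitive (saturated) sublattice of rank $r'$. By the standard machinery of the geometry of numbers --- specifically, Minkowski's second theorem applied to the symmetric convex body $B \cap V$, where $B := \prod_{i=1}^r [-K_i, K_i]$, together with the existence of a Minkowski-reduced basis --- I would obtain a basis $v_1, \dots, v_{r'}$ of $\Lambda$ and positive integers $M_1, \dots, M_{r'}$ such that every $\Phi(u)$ for $u \in U$ admits a representation $\Phi(u) = \sum_{i=1}^{r'} m_i v_i$ with $|m_i| \le M_i$, and moreover
\[
\prod_{i=1}^{r'} M_i \;\le\; C_{r'} \cdot \frac{\Vol_V(B \cap V)}{\mathrm{covol}_V(\Lambda)}.
\]
Defining the new generators $g_i^{\mathrm{new}} := \sum_j (v_i)_j g_j \in \R^d$, the symmetric GAP
\[
Q_0 := \Big\{\textstyle\sum_{i=1}^{r'} m_i g_i^{\mathrm{new}} : |m_i| \le M_i\Big\}
\]
has rank $r'$, contains $U$, and has box-size $\prod_i (2M_i+1)$. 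To bound $|Q_0|$ by $O_r(|P|)$, I would invoke the standard lattice-point counting estimate: for a symmetric convex body $K$ in an $r'$-dimensional subspace $V$ and a full-rank lattice $\Lambda$ in $V$, one has $|K \cap \Lambda| \gtrsim_{r'} \Vol_V(K)/\mathrm{covol}_V(\Lambda)$ whenever the right-hand side is at least $1$. Applied to $K = B \cap V$, and using $B \cap \Lambda \subset B \cap \Z^r$ together with $|B \cap \Z^r| = |P|$, this yields $\Vol_V(B \cap V)/\mathrm{covol}_V(\Lambda) = O_r(|P|)$, and hence $|Q_0| = O_r(|P|)$.

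Since $Q_0$ may fail to be proper, I would apply Lemma \ref{lemma:embeding} to embed $Q_0$ into a proper symmetric GAP $Q^{(1)}$ of rank at most $r'$ and size $O_r(|Q_0|) = O_r(|P|)$. Iterating on $U \subset Q^{(1)}$ until $U$ spans the ambient GAP completes the argument. The main obstacle is the geometry-of-numbers bookkeeping: one must select the Minkowski-reduced basis so that every element of $U$ has uniformly bounded coordinates, bound the product $\prod M_i$ via Minkowski's second theorem, and relate the resulting volume-to-covolume ratio to $|P|$ through the lattice-point counting inequality. These ingredients are classical but need to be applied with care so that the multiplicative constants do not degrade under the (at most $r$) iterations.
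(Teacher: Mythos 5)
You have proved the wrong statement. Lemma~\ref{lemma:embeding} concerns a single arbitrary GAP $P$ --- possibly non-symmetric and, crucially, possibly non-proper --- and asks for a proper symmetric GAP $Q$ of rank at most $r$ containing $P$ with $|Q|\le r^{Cr^3}|P|$. There is no auxiliary set $U$ and no map $\Phi$ in this statement. Your entire argument is built around ``reducing to the case where $\Phi(U)$ has full rank,'' which is the hypothesis-and-goal structure of Theorem~\ref{theorem:fullrank}, not of Lemma~\ref{lemma:embeding}. In particular your first step --- ``if $\Phi(U)$ already spans, take $Q=P$'' --- fails immediately for Lemma~\ref{lemma:embeding}, since $P$ may be neither proper nor symmetric, so $Q=P$ does not satisfy the conclusion, and $\Phi$ is not even well-defined unless $P$ is proper.

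More fatally, the proposal is circular: the step ``since $Q_0$ may fail to be proper, I would apply Lemma~\ref{lemma:embeding} to embed $Q_0$ into a proper symmetric GAP $Q^{(1)}$'' invokes exactly the lemma you are being asked to prove. The difficulty of Lemma~\ref{lemma:embeding} is precisely to convert a possibly degenerate progression into a proper one with controlled size loss, and your argument defers that entire difficulty to the lemma itself. For the record, the paper does not prove Lemma~\ref{lemma:embeding}; it cites it from Tao and Vu's \emph{Additive Combinatorics} (it is Theorem 3.40 there, ``Progressions lie inside proper progressions''). A genuine proof must work directly with the GAP $P$: consider the representation map $\phi:\Z^r\to\R^d$ and its associated kernel lattice, handle non-properness via the geometry of that kernel, then apply Minkowski's second theorem and John's-theorem-type arguments to choose a new generating set giving a proper symmetric container of controlled size --- without any appeal to a sample set $U$ and without invoking the lemma being proved.
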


\begin{proof} (of Theorem \ref{theorem:fullrank}) We shall mainly follow \cite[Section 8]{TVsing}.

Suppose that $\Phi(U)$ does not have full rank, then it is contained in a hyperplane of $\R^r$. In other words, there exist integers $\alpha_1,\dots,\alpha_r$ whose common divisor is one and $\alpha_1k_1+\dots + \alpha_r k_r=0$ for all $(k_1,\dots,k_r)\in \Phi(U)$.

Without loss of generality, we assume that $\alpha_r \neq 0$. We select $w$ so that $g_r=\alpha_r w$, and consider $P'$ be the GAP generated by $g_i':=g_i-\alpha_iw$ for $1\le i \le r-1$. The new symmetric GAP $P'$ will continue to contain $U$, because we have 

\begin{align*}
k_1g_1'+\dots +k_{r-1}g_{r-1}' &= k_1g_1+\dots+k_rg_r - w(\alpha_1k_1+\dots+\alpha_rg_r)\\
&= k_1g_1+\dots+k_rg_r
\end{align*}

for all $(k_1,\dots,k_r)\in \Phi(U)$. 

Also, note that the volume of $P'$ is $2^{r-1}K_1\dots K_{r-1}$, which is less than the volume of $P$.

We next use Lemma \ref{lemma:embeding} to guarantee that $P'$ is symmetric and proper without increasing the rank. 
 
Iterate the process if needed. Because the rank of the newly obtained proper symmetric GAP decreases strictly after each step, the process must terminate after at most $r$ steps.
 
\end{proof}

\section{A decoupling lemma and  inverse problem for bilinear forms}

As the first step to establish Theorem \ref{theorem:ILOquadratic:continuous}, we pass to bilinear forms by using a decoupling technique. 

Let $U$ be a subset of $\{1,\dots,n\}$. Let $A_U$ be a symmetric matrix of size $n$ by $n$ defined as

\[
A_U(ij)= 
\begin{cases}
a_{ij} & \text{ if either $i\in U$ and $j\notin  U$, or $i\notin U$ and $j\in U$},\\
0 & \text{otherwise,}
\end{cases}
\]

where we denoted by $A_U(ij)$ the $ij$ entry of $A_U$. 

\begin{lemma}[Decoupling lemma]\label{lemma:decoupling} Assume that 

$$\rho=\sup_{a,b_1,\dots,b_n} \P_{\Bx} \Big(\|\sum_{i,j}a_{ij}x_ix_j+\sum_i b_ix_i -a\|_2\le \beta\Big)\ge n^{-B}.$$

Then,

\begin{equation}\label{eqn:ILObilinear:0}
\P_{\Bv,\Bw}\Big(\|\sum_{i,j} A_U(ij)v_iw_j\|_2=O_B(\beta \sqrt{\log n})\Big) =\Theta(\rho^8),
\end{equation}

where $\Bv=(v_1,\dots,v_n)$, $\Bw=(w_1,\dots,w_n)$, and $v_i,w_j$ are iid copies of  $\xi-\xi'$.
\end{lemma}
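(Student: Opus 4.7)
The plan is a double application of the standard decoupling trick for quadratic forms. I split the index set $\{1,\dots,n\}$ into $U$ and its complement $U^c$, and decompose the quadratic form $S(\Bx) := \sum_{i,j} a_{ij} x_i x_j + \sum_i b_i x_i - a$ into the quadratic pieces $Q_U(\Bx_U),Q_{U^c}(\Bx_{U^c})$, the linear pieces $L_U(\Bx_U), L_{U^c}(\Bx_{U^c})$, and the mixed bilinear cross term $2B'(\Bx_U, \Bx_{U^c})$, where $B'(\Bu, \Bv) := \sum_{i\in U,\, j\in U^c} a_{ij} u_i v_j$. A single decoupling isolates $B'$ by killing all of $Q_U, Q_{U^c}, L_U, L_{U^c}$ and the constant $a$; iterating the decoupling, with the help of the identity
\[
\sum_{i,j} A_U(ij)\, v_i w_j \;=\; B'(\Bv_U, \Bw_{U^c}) + B'(\Bw_U, \Bv_{U^c})
\]
(which follows from $a_{ij}=a_{ji}$), then picks up the full target bilinear form. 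Crucially, the two summands on the right depend on disjoint coordinate blocks of $(\Bv, \Bw)$, so independent concentration bounds on each multiply to give a joint probability.

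For the first summand I would take four independent ``half-vectors'' $\Bx_U^{(1)}, \Bx_U^{(2)}$ and $\Bx_{U^c}^{(3)}, \Bx_{U^c}^{(4)}$ drawn from the marginal laws of $\Bx_U$ and $\Bx_{U^c}$. The four composites $\By^{(\alpha,\beta)} := (\Bx_U^{(\alpha)}, \Bx_{U^c}^{(\beta)})$ with $(\alpha,\beta) \in \{1,2\}\times\{3,4\}$ are each distributed as $\Bx$, and two rounds of Cauchy--Schwarz (first conditioning on $(\Bx_U^{(1)}, \Bx_U^{(2)})$ and applying Cauchy--Schwarz in the $\Bx_{U^c}$-variable, then in the reverse order) give
\[
\Pr\bigl(S(\By^{(\alpha,\beta)}) \in B(0,\beta) \text{ for all } (\alpha,\beta)\bigr) \;\geq\; \rho^4.
\]
On this event the alternating combination $S(\By^{(1,3)}) - S(\By^{(1,4)}) - S(\By^{(2,3)}) + S(\By^{(2,4)})$ has norm at most $4\beta$, and direct expansion shows every $Q$-, $L$-, and constant-term cancels under the alternating signs, leaving exactly $2B'(\Bv_U, \Bw_{U^c})$ with $\Bv_U := \Bx_U^{(1)} - \Bx_U^{(2)}$ and $\Bw_{U^c} := \Bx_{U^c}^{(3)} - \Bx_{U^c}^{(4)}$. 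Repeating the argument on a disjoint quartet of half-vectors $\Bx_U^{(3)}, \Bx_U^{(4)}, \Bx_{U^c}^{(1)}, \Bx_{U^c}^{(2)}$ (swapping the $U$- and $U^c$-roles) gives $\|B'(\Bw_U, \Bv_{U^c})\|_2 \leq 2\beta$ with probability at least $\rho^4$, for $\Bw_U := \Bx_U^{(3)} - \Bx_U^{(4)}$, $\Bv_{U^c} := \Bx_{U^c}^{(1)} - \Bx_{U^c}^{(2)}$. Since the two decouplings involve disjoint half-vectors, their events are probabilistically independent and hold jointly with probability at least $\rho^8$. On this joint event $\|\sum_{i,j} A_U(ij)\, v_i w_j\|_2 \leq 4\beta$, where $\Bv := (\Bv_U, \Bv_{U^c})$ and $\Bw := (\Bw_U, \Bw_{U^c})$ are independent $n$-vectors with iid coordinates distributed as $\xi - \xi'$. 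This delivers the lower bound $\Pr \geq c\rho^8$, with ball radius $O(\beta)$ (in particular, at most $O(\beta\sqrt{\log n})$).

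The main obstacle is the matching upper bound $\Pr \leq C \rho^8$, together with the explanation of the $\sqrt{\log n}$ inflation of the ball radius. The natural route is a Fourier/Esseen-inequality argument: write $\rho$ and the desired bilinear concentration as integrals of the modulus of the characteristic function over a ball of radius $\asymp \beta^{-1}$ in Fourier space, and mirror the eightfold decoupling above at the level of characteristic functions via iterated Cauchy--Schwarz to bound one by the other pointwise. Enlarging the Fourier ball from radius $\asymp \beta^{-1}$ to $\asymp \beta^{-1}\sqrt{\log n}$ then introduces a logarithmic tail correction; this is where I expect the stated slack to come from, and it is the step that goes beyond the elementary decoupling identity.
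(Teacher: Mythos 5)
Your lower-bound argument is correct and takes a genuinely different, more elementary route than the paper. The paper works in Fourier space: it majorizes the small-ball probability by the Gaussian exponential moment $\E_{\Bx}\exp(-\frac{\pi}{2}\|\cdot\|_2^2)$ via Markov, expands that moment through the identity $\exp(-\frac{\pi}{2}\|x\|_2^2)=\int_{\R^d} e(x\cdot t)\exp(-\frac{\pi}{2}\|t\|_2^2)\,dt$ into a Gaussian-weighted integral of the characteristic function, applies Cauchy--Schwarz three times to that integral to manufacture the eighth power and pass to the decoupled bilinear form, and finally converts the resulting exponential-moment bound back into a probability. It is this last step (cutting off the Gaussian tail where $\|\cdot\|_2\gg\sqrt{\log n}$, using $\rho\ge n^{-B}$), not the decoupling itself, that produces the $\sqrt{\log n}$ inflation. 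Your physical-space decoupling reaches the same $\rho^8$ directly: one application of the $2\times 2$ alternating-sign identity to a quartet of independent half-vectors kills all pure-$U$, pure-$U^c$, linear and constant terms and leaves $2B'(\Bv_U,\Bw_{U^c})$; a second quartet handles $B'(\Bw_U,\Bv_{U^c})$; the symmetry $a_{ij}=a_{ji}$ reassembles the two into $\sum_{i,j}A_U(ij)v_iw_j$; and the two events are independent because they live on disjoint randomness. The iterated-Jensen computation $\P(\text{all four in the ball})\ge\rho^4$ is exactly right. Your version gives a fixed radius $4\beta$ with explicit constant, which is strictly sharper than the paper's $O_B(\beta\sqrt{\log n})$ and $\frac12\rho^8/((2\pi)^{7d/2}e^{4\pi})$.

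On the part you flagged as the main obstacle: the matching upper bound $\P\le C\rho^8$ is not actually proved in the paper, and it is not needed. Despite the $\Theta(\rho^8)$ in the displayed statement, the appendix only establishes the lower bound, and the only downstream use of Lemma \ref{lemma:decoupling} is to supply the hypothesis $\sup_a\P(\|\sum a_{ij}x_iy_j-a\|_2\le\beta')\ge n^{-B'}$ of Theorem \ref{theorem:ILObilinear:continuous}, which is a lower-bound requirement. So the Fourier machinery you sketched for the converse direction is unnecessary, and the $\sqrt{\log n}$ is a harmless artifact of the paper's Gaussian-moment route rather than something your proof must reproduce.
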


We refer the reader to Appendix \ref{appendix:decoupling} for a proof of this lemma.

Lemma \ref{lemma:decoupling} asserts that if $\rho_{\beta,\xi}(A)$ is large then $\sum_{i,j}A_U(ij)v_iw_j$ has small norm with high probability. This fact allows us to deduce useful information for $A_U$ (for all $U$) by combining with the following inverse-type result.

\begin{theorem}[Inverse Littlewood-Offord theorem for bilinear forms]\label{theorem:ILObilinear:continuous}
Let $0 <\ep < 1$ and $B>0$. Let $ \beta >0$ be a parameter that may depend on $n$. Assume that  

$$\sup_{a} \P_{\Bx,\By}(\|\sum_{i,j\le n}a_{ij}x_iy_j -a\|_2\le \beta)\ge n^{-B},$$
 
where $\Bx=(x_1,\dots,x_n), \By=(y_1,\dots,y_n)$, and $x_i$ and $y_i$ are iid copies of a random variable $\xi$ satisfying Condition \ref{condition:space}.
Then, there exist an integer $k\neq 0, |k|=n^{O_{B,\ep}(1)}$, a set of $r=O(1)$ rows $\row_{i_1}, \dots, \row_{i_r}$ of $A$, and set $I$ of size at least $n-2n^\ep$ such that for each $i\in I$, there exist integers $k_{ii_1},\dots, k_{ii_r}$, all bounded by $n^{O_{B,\ep}(1)}$, such that the following holds.

\begin{equation}\label{eqn:special}
\P_\By\Big(\| \By \cdot (k\row_i(A) + \sum_{j} k_{ii_j} \row_{i_j}(A))\|_2 \le \beta n^{O_{B,\ep}(1)}\Big)\ge n^{-O_{B,\ep}(1)}.
\end{equation}

\end{theorem}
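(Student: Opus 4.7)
The plan is to reduce to the linear Littlewood--Offord inverse theorem (Theorem~\ref{theorem:ILOlinear:continuous}) by fixing the $\By$ variable in the bilinear form, and then to align the resulting GAP structures across fibers via a sequence of pigeonhole steps. First, Fubini on the hypothesis yields a set $T$ of $\By$-values of probability at least $\rho/2 = n^{-B-O(1)}$ such that for each $\By \in T$ the linear form $\Bx \mapsto \Bx \cdot (A\By)$ has $\P_\Bx(\Bx \cdot (A\By) \in B(a,\beta)) \ge \rho/2$. Applying Theorem~\ref{theorem:ILOlinear:continuous} (rescaled to absorb $\|A\By\|_2$ into $\beta$) to the vector $A\By = (\By \cdot \row_i(A))_{i=1}^n \in (\R^d)^n$ produces a proper symmetric GAP $Q(\By) \subset \R^d$ of rank $r_0 = O_{B,\ep}(1)$ and size $n^{O_{B,\ep}(1)}$, with generators in a discrete set of cardinality $n^{O_{B,\ep}(1)}$ (third bullet of Theorem~\ref{theorem:ILOlinear:continuous}), such that at least $n - n^\ep$ of the coordinates $\By \cdot \row_i(A)$ lie within $\beta n^{O_{B,\ep}(1)}$ of $Q(\By)$. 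Pigeonholing over GAPs restricts to $T' \subseteq T$ of probability $n^{-O_{B,\ep}(1)}$ on which $Q(\By) \equiv Q$ is one fixed GAP, with a good index set $I(\By) \subseteq [n]$ and nearest-point assignment $\hat q_i(\By) \in Q$.

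The heart of the argument is an iterative selection of reference indices $i_1,\dots,i_r$ with $r \le r_0$. Let $\Phi: Q \to \Z^{r_0}$ be the GAP coordinate map. Start with empty reference list, $T^{(0)} := T'$, and trivial span $W^{(0)} = \{0\}$. At stage $s$, set $I^{(s)} := \{i : \P(i \in I(\By) \mid T^{(s)}) \ge 1/2\}$; the pointwise bound $|I(\By)^c| \le n^\ep$ from Theorem~\ref{theorem:ILOlinear:continuous}, combined with a Markov argument inside $T^{(s)}$, gives $|I^{(s)}| \ge n - 2n^\ep$. For each $i \in I^{(s)}$ pigeonhole $\hat q_i(\By)$ over the $|Q|$ possible values to extract a popular $c_i^{(s)} \in Q$ with $\P(\hat q_i(\By) = c_i^{(s)} \mid T^{(s)}) \ge (2|Q|)^{-1}$. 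If $\Phi(c_i^{(s)}) \in W^{(s)}$ for every $i \in I^{(s)}$, stop. Otherwise pick $i$ violating this, append it as $i_{s+1}$ with target $c_{i_{s+1}}^* := c_i^{(s)}$, and restrict to the subset $T^{(s+1)} \subseteq T^{(s)}$ on which $\hat q_{i_{s+1}}(\By) = c_{i_{s+1}}^*$. Each step costs a factor $O(|Q|^{-1}) = n^{-O_{B,\ep}(1)}$ in probability but strictly increases $\rank W^{(s)}$, so the loop terminates after $r \le r_0$ stages with $\P(T^{(r)}) \ge n^{-O_{B,\ep}(1)}$.

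Apply Theorem~\ref{theorem:fullrank} to $U := \{c_{i_j}^*\}_{j=1}^r \subset Q$ to obtain a proper symmetric GAP $Q^* \supseteq U$ of rank exactly $r$ and size $n^{O_{B,\ep}(1)}$, spanned by $U$. Let $M$ be the invertible $r \times r$ integer matrix whose rows are the $Q^*$-coordinates of $c_{i_j}^*$, and set $k := \det M$; this is a nonzero integer with $|k| = n^{O_{B,\ep}(1)}$. Take $I := I^{(r)}$, of size at least $n - 2n^\ep$. For each $i \in I$, a final pigeonhole over $Q$ provides $c_i^* \in Q^*$ with $\P(\hat q_i(\By) = c_i^* \mid T^{(r)}) \ge (2|Q|)^{-1}$; since $\Phi(c_i^*)$ lies in $W^{(r)}$ by termination, solving the corresponding linear system over $\Z$ (with denominator $k$) produces integers $k_{ii_j}$ of size $n^{O_{B,\ep}(1)}$ with $k c_i^* + \sum_j k_{ii_j} c_{i_j}^* = 0$. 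On the event $T^{(i)} := T^{(r)} \cap \{\hat q_i(\By) = c_i^*\}$, of probability $n^{-O_{B,\ep}(1)}$, the simultaneous approximations $\By \cdot \row_i(A) = c_i^* + O(\beta n^{O_{B,\ep}(1)})$ and $\By \cdot \row_{i_j}(A) = c_{i_j}^* + O(\beta n^{O_{B,\ep}(1)})$ give $\|\By \cdot (k\row_i(A) + \sum_j k_{ii_j}\row_{i_j}(A))\|_2 \le \beta n^{O_{B,\ep}(1)}$, establishing~\eqref{eqn:special}.

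The main obstacle is the iterative span-building: the $|Q|$-sized pigeonhole must run at most $O_{B,\ep}(1)$ times so that the compounded probability loss stays $n^{-O_{B,\ep}(1)}$, while producing a single scalar $k$ uniform in $i$ (with only the $k_{ii_j}$ allowed to depend on $i$). A secondary nuisance is the rescaling needed to apply Theorem~\ref{theorem:ILOlinear:continuous} to the unnormalized vector $A\By$, which can be handled by a dyadic decomposition of $\|A\By\|_2$ or by dividing by that norm and transporting the resulting GAP back to the original scale.
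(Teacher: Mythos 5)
Your overall strategy --- condition on $\By$, apply the linear inverse theorem (Theorem~\ref{theorem:ILOlinear:continuous}) fibre by fibre, align the resulting GAP data across fibres by pigeonhole, choose $O(1)$ reference rows whose coordinate vectors span, and close with a Cramer-type identity --- is essentially the paper's strategy. But one specific step, and it is load-bearing, does not go through as stated: the pigeonhole that restricts to $T'\subset T$ on which ``$Q(\By)\equiv Q$ is one fixed GAP.''

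The problem is that the cardinality of the set of candidate GAPs produced by Theorem~\ref{theorem:ILOlinear:continuous} is \emph{not} $n^{O_{B,\ep}(1)}$ after the rescaling. If you normalize the fibre data to $\sum_i\|\Ba_i\cdot\By\|_2^2=1$ and apply the theorem at scale $\tilde\beta:=\beta/\|A\By\|_2$, the third bullet gives generators of the form $\tilde\beta\, p_{ij}/p$ with $|p|=O(\sqrt{n'})$ but $|p_{ij}|=O(\tilde\beta^{-1}\sqrt{n'})=O(\|A\By\|_2\,\beta^{-1}\sqrt{n'})$; transporting back to the original scale the generators become $\beta\,p_{ij}/p$ with the same bound on $p_{ij}$. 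Thus the discrete set in which the generators live has cardinality on the order of $(\|A\By\|_2/\beta)^{O(1)}\cdot n^{O(1)}$, which can be superpolynomial (indeed, nothing in the hypotheses controls $\|A\By\|_2/\beta$, since the $a_{ij}$ are unnormalized and $\beta$ is arbitrary). Your proposed dyadic decomposition of $\|A\By\|_2$ does not obviously repair this, because the number of dyadic scales that contribute non-negligibly, and the generator granularity inside each dyadic window, still depend on $\beta^{-1}$. The rest of your span-building loop, the application of Theorem~\ref{theorem:fullrank}, and the Cramer-rule determinant trick with $k=\det M$ are all fine once $Q$ can be treated as fixed --- but that hypothesis is exactly what is not available.

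The paper sidesteps this entirely: it never fixes the GAP $Q_\By$. Instead it works throughout with the integer coordinate vectors $\Phi_\By(q_i(\By))\in\Z^r$, which are bounded by $n^{O_{B,\ep}(1)}$ regardless of what the generators are, and pigeonholes over (i) the tuple of spanning indices, (ii) the $r\times r$ integer coefficient matrix of those reference rows, and (iii) the Cramer determinants attached to each individual row. Each of those pigeonholes is over $n^{O_{B,\ep}(1)}$ alternatives, independently of $\|A\By\|_2$ and $\beta$. The key observation making this possible is that the Cramer identity
$k\,q(\By)+k_1 q_1(\By)+\cdots+k_r q_r(\By)=0$
is a relation among the GAP \emph{elements} alone --- the generators $g_j(\By)$ never appear --- so it is insensitive to $Q_\By$ varying with $\By$. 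Your argument would be rescued by making exactly this substitution: replace the pigeonhole over the GAP $Q$ by a pigeonhole over the integer coordinate tuples of the reference rows, and run your iterative span-building in $\Z^{r_0}$ on those coordinate vectors rather than on the points $\hat q_i(\By)\in Q$.

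A smaller point worth flagging: in your final step you choose a popular value $c_i^*$ for each $i\in I$ and assert that $\Phi(c_i^*)\in W^{(r)}$ ``by termination.'' The termination condition only guarantees this for the particular popular value $c_i^{(r)}$ selected during the loop; if the later pigeonhole happens to choose a different popular value you would need to re-run the argument. Taking $c_i^*:=c_i^{(r)}$ (i.e., reusing the stage-$r$ popular value rather than repeating the pigeonhole) removes the ambiguity.
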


For the rest of this section, we prove Theorem \ref{theorem:ILObilinear:continuous}.

First of all, for minor technical reasons, it is convenient to assume $\xi$ to have discrete distribution. The continuous case can be recovered by approximating the continuous distribution by a discrete one while holding $n$ fixed.  

For short, we denote the vector $(a_{i1},\dots,a_{in})$ by $\Ba_i$. We begin by applying Theorem \ref{theorem:ILOlinear:continuous}.

\begin{lemma}\label{lemma:roworthogonal} 
Let $\ep<1$, and $B$ be positive constants. Assume that 

$$\rho=\sup_a\P_{\Bx,\By} \big(|\sum_{i,j}a_{ij}x_iy_j -a|\le \beta\big)\ge n^{-B}.$$ 

Then, the following holds with probability at least $3\rho/4$ with respect to $\By=(y_1,\dots,y_n)$. There exist a proper symmetric GAP $Q_\By\subset \R^d$ of rank $O_{B,\ep}(1)$ and size $\max(O_{B,\ep}(\rho^{-1}/n^{\ep/2}),1)$, and an index set $I_\By$ of size $n-n^\ep$ such that $ \Ba_i \cdot \By$ is $\beta$-close to $Q_\By$ for all $i\in I_{\By}$.
\end{lemma}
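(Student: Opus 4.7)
The plan is to condition on $\By$: once $\By$ is fixed, the bilinear form in the hypothesis reduces to the linear form $\sum_i(\Ba_i\cdot\By)\,x_i$ in $\Bx$, and Theorem \ref{theorem:ILOlinear:continuous} applies directly to the coefficient multiset $\{\Ba_i\cdot\By:1\le i\le n\}\subset \R^d$. The only real work is an averaging step showing that the conditional small ball probability remains polynomially large for a set of $\By$ of measure at least $3\rho/4$.

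For each $\By\in\R^n$, set
\[
g(\By):=\sup_{a\in\R^d}\P_\Bx\Big(\big\|{\textstyle\sum_i}(\Ba_i\cdot\By)\,x_i-a\big\|_2\le \beta\Big),
\]
and fix an $a^*$ (nearly) achieving the outer supremum in the definition of $\rho$. Since $\P_\Bx(\|\cdots-a^*\|_2\le \beta)\le g(\By)$ pointwise, Fubini gives $\E_\By g(\By)\ge \rho$. As $0\le g\le 1$, splitting the expectation at the threshold $\rho/4$ yields
\[
\rho\;\le\;\E_\By g(\By)\;\le\;\P_\By(g(\By)\ge \rho/4)+\big(1-\P_\By(g(\By)\ge \rho/4)\big)\cdot\frac{\rho}{4},
\]
so $\P_\By(g(\By)\ge \rho/4)\ge \tfrac{3\rho/4}{1-\rho/4}\ge 3\rho/4$.

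For each such ``good'' $\By$, I will invoke Theorem \ref{theorem:ILOlinear:continuous} on the coefficient multiset $\{\Ba_i\cdot\By\}$ with parameter $n'=n^{\ep}$ and with the bound $\rho/4\ge n^{-B}/4$ playing the role of the small ball probability. The normalisation $\sum_i\|a_i\|_2^2=1$ in that theorem is only a convention: a global rescaling of the coefficients induces a rescaling of the output GAP generators and of $\beta$, but does not alter the rank or cardinality. This produces a proper symmetric GAP $Q_\By\subset\R^d$ of rank $O_{B,\ep}(1)$ and size at most $\max(O_{B,\ep}(\rho^{-1}/n^{\ep/2}),1)$, together with an index set $I_\By$ of size $\ge n-n^{\ep}$ such that $\Ba_i\cdot\By$ is $\beta$-close to $Q_\By$ for every $i\in I_\By$. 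This is exactly the claim.

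The only delicate point is the choice of centre in the averaging step: the optimal $a$ for the conditional linear form may depend on $\By$, which is precisely why $g(\By)$ must be defined with the supremum taken after conditioning on $\By$. Beyond this minor bookkeeping, the lemma is a direct specialisation of Theorem \ref{theorem:ILOlinear:continuous}.
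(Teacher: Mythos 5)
Your argument is essentially the paper's own: decompose $\sum_{i,j}a_{ij}x_iy_j=\sum_i x_i(\Ba_i\cdot\By)$, average over $\By$ to show a $3\rho/4$-fraction of $\By$ are ``good'' (conditional small ball probability at least $\rho/4$), then apply Theorem~\ref{theorem:ILOlinear:continuous} with $n'=n^\ep$ to the coefficient multiset $\{\Ba_i\cdot\By\}$. The one point you gloss over is the degenerate case $\Ba_i\cdot\By=\mathbf 0$ for all $i$, where no rescaling to $\sum_i\|a_i\|_2^2=1$ is possible; the paper disposes of this trivially by taking $Q_\By=\{\mathbf 0\}$, and you should add that line, but otherwise the proof is correct and follows the same route.
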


\begin{proof}(of Lemma \ref{lemma:roworthogonal}) 
Write

$$\sum_{i,j} a_{ij}x_iy_j = \sum_{i=1}^n x_i (\Ba_i\cdot \By).$$

We say that a vector $\By=(y_1,\dots,y_n)$ is {\it good} if

$$\P_\Bx\big(|\sum_{i=1}^n x_i (\Ba_i \cdot \By) -a|\le \beta\big)\ge \rho/4.$$
 
We call $\By$ {\it bad} otherwise. 

Let $G$ denote the collection of good vectors. We are going to estimate the probability $p$ of a randomly chosen vector $\By=(y_1,\dots,y_n)$ being bad by an averaging method.

\begin{align*}
\P_{\By} \P_{\Bx} \big(|\sum_{i=1}^n x_i(\Ba_i \cdot \By)-a|\le \beta\big) &=\rho\\
p \rho/4 + 1-p &\ge \rho\\
(1-\rho)/(1-\rho/4) &\ge p.
\end{align*}

Thus, the probability of a randomly chosen $\By$ belonging to $G$ is at least 

$$1-p \ge (3\rho/4)/(1-\rho/4) \ge 3\rho/4.$$

Consider a good vector $\By\in G$. By definition, we have

$$\P_{\Bx} \big(|\sum_{i=1}^n x_i (\Ba_i \cdot \By) -a|\le \beta\big) \ge \rho/4.$$

Next, if $\Ba_i \cdot \By =\mathbf{0}$ for all $i$, then the conclusion of the lemma holds trivially for $Q_\By :=\mathbf{0}$. Otherwise, we apply Theorem \ref{theorem:ILOlinear:continuous} to the sequence $\{\Ba_i \cdot \By$, $i=1,\dots,n\}$ (after a rescaling). As a consequence, we obtain an index set $I_\By$ of size $n-n^\ep$ and a proper symmetric GAP $Q_{\By}$ of rank $O_{B,\ep}(1)$ and size $\max(O_{B,\ep}(\rho^{-1}/n^{\ep/2}),1)$, together with its elements $q_i(\By)$, such that $\|\Ba_i \cdot \By  -q_i(\By)\|_2\le \beta$ for all $i\in I_\By$.
\end{proof}

We now work with $q_i(\By)$, where $\By\in G$. 

{\bf Common generating indices}. By Theorem \ref{theorem:fullrank}, we may assume that the $q_i(\By)$ span $Q_{\By}$. We choose from $I_\By$ $s$ indices $i_{y_1},\dots,i_{y_s}$ such that $q_{i_{y_j}}(\By)$ span $Q_\By$, where $s$ is the rank of $Q_\By$. Note that $s=O_{B,\ep}(1)$ for all $\By\in G$. 

Consider the tuples $(i_{y_1},\dots,i_{y_s})$ for all $\By\in G$. Because there are $\sum_{s} O_{B,\ep}(n^s) = n^{O_{B,\ep}(1)}$ possibilities these tuples can take, there exists a tuple, say $(1,\dots,r)$ (by rearranging the rows of $A$ if needed), such that $(i_{y_1},\dots,i_{y_s})=(1,\dots,r)$ for all $\By\in G'$, a subset $G'$ of $G$ which satisfies 

\begin{equation}
\P_\By(\By\in G')\ge \P_\By(\By\in G)/n^{O_{C,\ep}(1)} =\rho/n^{O_{B,\ep}(1)}.
\end{equation}

{\bf Common coefficient tuple}. For each $1\le i\le r$, we express $q_i(\By)$ in terms of the generators of $Q_\By$ for each $\By\in G'$, 

$$q_i(\By) = c_{i1}(\By)g_{1}(\By)+\dots + c_{ir}(\By)g_{r}(\By),$$ 

where $c_{i1}(\By),\dots c_{ir}(\By)$ are integers bounded by $n^{O_{B,\ep}(1)}$, and $g_{i}(\By)$ are the generators of $Q_\By$.

We will show that there are many $\By$ that correspond to the same coefficients $c_{ij}$. 

Consider the collection of the coefficient-tuples $\Big(\big(c_{11}(\By),\dots,c_{1r}(\By)\big);\dots; \big(c_{r1}(\By),\dots c_{rr}(\By)\big)\Big)$ for all $\By\in G'$. Because the number of possibilities these tuples can take is at most

$$(n^{O_{B,\ep}(1)})^{r^2} =n^{O_{B,\ep}(1)}.$$

There exists a coefficient-tuple, say  $\Big((c_{11},\dots,c_{1r}),\dots, (c_{r1},\dots c_{rr})\Big)$, such that  

$$\Big(\big(c_{11}(\By),\dots,c_{1r}(\By)\big);\dots; \big(c_{r1}(\By),\dots c_{rr}(\By)\big)\Big) =\Big((c_{11},\dots,c_{1r}),\dots, (c_{r1},\dots c_{rr})\Big)$$  

for all $\By\in G''$, a subset of $G'$ which satisfies 

\begin{equation}
\P_\By(\By\in G'')\ge \P_\By(\By\in G')/n^{O_{B,\ep}(1)} \ge \rho/n^{O_{B,\ep}(1)}.
\end{equation}

In summary, there exist $r$ tuples  $(c_{11},\dots,c_{1r}),\dots, (c_{r1},\dots c_{rr})$, whose components are integers bounded by $n^{O_{B,\ep}(1)}$, such that the followings hold for all $\By\in G''$.

\begin{itemize}

\item $q_i(\By) = c_{i1}g_{1}(\By)+\dots + c_{jr}g_{r}(\By)$, for $i=1,\dots,r$.

\vskip .1in

\item The vectors  $(c_{11},\dots,c_{1r}),\dots, (c_{r1},\dots c_{rr})$ span $\Z^{\rank(Q_\By)}$.

\end{itemize} 

Next, because $|I_\By|\ge n-n^\ep$ for each $\By\in G''$, by an averaging argument, there exists a set $I$ of size $n-2n^\ep$ such that for each $i\in I$ we have 

\begin{equation}
\P_\By(i\in I_\By, \By\in G'') \ge \P_\By(\By\in G'')/2.
\end{equation}  

From now on we fix an arbitrary row $\Ba$ of index from $I$. We will focus on those $\By\in G''$ where the index of $\Ba$ belongs to $I_\By$. 

{\bf Common coefficient tuple for each individual.} Because $q(\By) \in Q_{\By}$ ($q(\By)$ is the element of $Q_\By$ that is $\beta$-close to $\Ba \cdot \By$), we can write 

$$q(\By) = c_{1}(\By)g_{1}(\By)+\dots c_{r}(\By)g_{r}(\By)$$ 

where $c_{i}(\By)$ are integers bounded by $n^{O_{B,\ep}(1)}$.

For short, for each $i$ we denote by $\Bv_i$ the vector $(c_{i1},\dots,c_{ir})$, we will also denote by $\Bv_{\Ba,\By}$ the vector $(c_{1}(\By),\dots c_{r}(\By))$. 

Because $Q_{\By}$ is spanned by $q_1(\By),\dots, q_{r}(\By)$, we have $k=\det(\mathbf{v}_1,\dots \mathbf{v}_r)\neq 0$, and that 

\begin{equation}\label{eqn:ILObilinear:det}
k q(\By) + \det(\mathbf{v}_{\Ba,\By},\mathbf{v}_2,\dots,\mathbf{v}_r)q_{1}(\By) +\dots + \det(\mathbf{v}_{\Ba,\By},\mathbf{v}_1,\dots,\mathbf{v}_{r-1})q_{r}(\By) =0.
\end{equation}

It is crucial to note that $k$ is independent of the choice of $\Ba$ and $\By$. 

Next, because each coefficient of \eqref{eqn:ILObilinear:det} is bounded by $n^{O_{B,\ep}(1)}$, there exists a subset $G_{\Ba}''$ of $G''$ such that all $\By\in G_{\Ba}''$ correspond to the same identity, and

\begin{equation}
\P_\By(\By\in G_{\Ba}'') \ge (\P_\By(\By\in G'')/2)/(n^{O_{B,\ep}(1)})^r = \rho/n^{O_{B,\ep}(1)} = n^{-O_{B,\ep}(1)}.
\end{equation}

In other words, there exist integers $k_1,\dots,k_r$ depending on $\Ba$, all bounded by $n^{O_{B,\ep}(1)}$, such that 

\begin{equation}\label{eqn:ILObilinear:q}
k q(\By) + k_1 q_{1}(\By) + \dots + k_r q_{r}(\By)=0
\end{equation}

for all $\By\in G_{\Ba}''$. 

{\bf Passing back to $A$.} Because $q_i(\By)$ are $\beta$-close to $\Ba_i \cdot \By$, it follows from \eqref{eqn:ILObilinear:q} that

\begin{equation}\label{eqn:ILObilinear:a}
\|k \Ba \cdot \By + k_1  \Ba_1 \cdot \By + \dots + k_r \Ba_{r} \cdot \By\|_2 = \|(k\Ba+k_1\Ba_1+\dots+\Ba_r)\cdot \By \|_2\le n^{O_{B,\ep}(1)}\beta.
\end{equation}

Furthermore, as $\P_\By(\By\in G_{\Ba}'') =n^{-O_{B,\ep}(1)}$, we have

\begin{equation}\label{eqn:ILObilinear:b}
\P_{\By}(\|(k\Ba+k_1\Ba_1+\dots+k_r\Ba_r)\cdot \By \|_2\le n^{O_{B,\ep}(1)}\beta)=n^{-O_{B,\ep}(1)}.
\end{equation}

Because \eqref{eqn:ILObilinear:b} holds for any row $\Ba$ indexing from $I$, we have obtained the conclusion of Theorem \ref{theorem:ILObilinear:continuous}.

\section{proof of Theorem \ref{theorem:ILOquadratic:continuous}}\label{section:ILOquadratic:proof}

By the definition of $\xi$, it is clear that the random variable $\xi-\xi'$ also satisfies Condition \ref{condition:space} (with different positive parameters). We next apply Theorem \ref{theorem:ILObilinear:continuous} to \eqref{eqn:ILObilinear:0} to obtain the following lemma. 

\begin{lemma}\label{lemma:quadratic:row} There exist a set $I_0(U)$ of size $O_{B,\ep}(1)$ and a set $I(U)$ of size at least $n-n^\ep$, and a nonzero integer $k(U)$ bounded by $n^{O_{B,\ep}(1)}$ such that for any $i\in I$, there are integers $k_{ii_0}(U), i_0\in I_0(U)$, all bounded by $n^{O_{B,\ep}(1)}$, such that 

$$\P_\By \Big(\|(k(U)\Ba_i(A_U)+ \sum_{i_0\in I_0} k_{ii_0}(U) \Ba_{i_0}(A_U))\cdot \By\|_2 \le \beta n^{O_{B,\ep}(1)}\Big) = n^{-O_{B,\ep}(1)},$$

where $\By=(y_1,\dots,y_n)$ and $y_i$ are iid copies of $\xi-\xi'$.
\end{lemma}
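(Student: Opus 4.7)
The plan is to apply Theorem \ref{theorem:ILObilinear:continuous} in a black-box fashion to the matrix $A_U$, taking the role of the random variable $\xi$ in that theorem to be $\xi - \xi'$. The decoupling estimate \eqref{eqn:ILObilinear:0} already supplies precisely the kind of bilinear small-ball hypothesis that Theorem \ref{theorem:ILObilinear:continuous} requires, so the task reduces to unpacking the output and renaming the objects.

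First, I would verify the hypotheses. The variable $\xi - \xi'$ satisfies Condition \ref{condition:space} with (possibly different) positive constants, as noted at the start of the section. Next, taking the supremum at $a = 0$, the decoupling conclusion \eqref{eqn:ILObilinear:0} gives
$$\P_{\Bv,\Bw}\bigl(\|\textstyle\sum_{i,j} A_U(ij) v_i w_j\|_2 \le C_B \beta \sqrt{\log n}\bigr) = \Theta(\rho^8) \ge n^{-8B - O(1)},$$
which is exactly the hypothesis of Theorem \ref{theorem:ILObilinear:continuous} applied to the matrix $A_U$, at scale $\beta' := C_B \beta \sqrt{\log n}$ and with exponent $B' := 8B + O(1)$.

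Second, I would apply Theorem \ref{theorem:ILObilinear:continuous} with parameter $\ep' := \ep/2$, so that the output index set of size at least $n - 2n^{\ep'}$ is at least $n - n^\ep$. The theorem then produces a nonzero integer $k$ of size $n^{O_{B,\ep}(1)}$, a collection of $r = O(1)$ rows $\row_{i_1}(A_U), \dots, \row_{i_r}(A_U)$, and integers $k_{ii_j}$ bounded by $n^{O_{B,\ep}(1)}$ such that for each $i$ in the resulting index set,
$$\P_\By\bigl(\|\By \cdot (k\, \row_i(A_U) + \textstyle\sum_j k_{ii_j}\, \row_{i_j}(A_U))\|_2 \le n^{O_{B,\ep}(1)} \beta'\bigr) \ge n^{-O_{B,\ep}(1)}.$$
Renaming $k(U) := k$, $I_0(U) := \{i_1, \dots, i_r\}$, $I(U)$ to be the resulting index set, and $k_{ii_0}(U)$ to be the corresponding coefficients, and absorbing the $\sqrt{\log n}$ factor from $\beta'$ into $n^{O_{B,\ep}(1)}$, I obtain the statement of the lemma.

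There is no substantive obstacle here: the lemma is a direct specialization of Theorem \ref{theorem:ILObilinear:continuous} to $A_U$, fed by the small-ball estimate from the decoupling Lemma \ref{lemma:decoupling}. The only care needed is routine bookkeeping: the exponent $8$ in $\rho^8$, the $\sqrt{\log n}$ factor from decoupling, and the refinement of $\ep$ to $\ep/2$ must all be tracked through the polynomial $n^{O_{B,\ep}(1)}$ bounds, but each is absorbed harmlessly into the implicit constants.
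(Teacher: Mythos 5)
Your proposal is correct and follows the same approach as the paper, which dispatches the lemma in a single sentence by applying Theorem \ref{theorem:ILObilinear:continuous} (with $\xi-\xi'$ in place of $\xi$) to the decoupled small-ball estimate \eqref{eqn:ILObilinear:0}. Your bookkeeping of the exponent $8$, the $\sqrt{\log n}$ factor, and the choice $\ep' = \ep/2$ is exactly the routine unpacking the paper leaves implicit.
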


Note that this lemma holds for all $U\subset [n]$. In what follows we will gather these information. 

As $I_0(U)\subset [n]^{O_{B,\ep}(1)}$ and $k(U)\le n$, there are only $n^{O_{B,\ep}(1)}$ possibilities that the tuple $(I_0(U),k(U))$ can take. Thus, there exists a tuple $(I_0,k)$ such that 
$I_0(U)=I_0$ and $k(U)=k$ for $2^n/n^{O_{B,\ep}(1)}$ different sets $U$. Let us denote this set of $U$ by $\mathcal{U}$; we have

$$|\mathcal{U}|\ge 2^n/n^{O_{B,\ep}(1)}.$$

Next, let $I$ be the collection of all $i$ which belong to at least $|\mathcal{U}|/2$ index sets $I_U$. Then,
 
\begin{align*}                         
|I||\mathcal{U}| + (n-|I|)|\mathcal{U}|/2 & \ge (n-n^\ep )|\mathcal{U}|\\
|I| &\ge  n-2n^\ep.
\end{align*}

From now on we fix an $i\in I$. Consider the tuples $(k_{ii_0}(U), i_0\in I_0)$ over all $U$ where $i\in I_U$. Because there are only $n^{O_{B,\ep}(1)}$ possibilities such tuples can take, there must be a tuple, say $(k_{ii_0}, i_0\in I_0)$, such that $(k_{ii_0}(U), i_0\in I_0)=(k_{ii_0}, i_0\in I_0)$ for at least $|\mathcal{U}|/2n^{O_{B,\ep}(1)}=2^n/n^{O_{B,\ep}(1)}$ sets $U$. 

Because $|I_0|=O_{B,\ep}(1)$, there is a way to partition $I_0$ into $I_0' \cup I_0''$ such that there are $2^n/n^{O_{B,\ep}(1)}$ sets among the $U$ above that satisfy $U\cap I_0=I_0''$. Let $\mathcal{U}_{I_0',I_0''}$ denote the collection of these $U$.

By passing to consider a subset of  $\mathcal{U}_{I_0',I_0''}$ if needed, we may assume that either $i\notin U$ or $i\in U$ for all $U\in  \mathcal{U}_{I_0',I_0''}$. Without loss of generality, we assume the first case. (The other case can be treated similarly).

Let $U\in \mathcal{U}_{I_0',I_0''}$ and $\Bu=(u_1,\dots,u_n)$ be its characteristic vector ($u_j=1$ if $j\in U$, and $u_j=0$ otherwise). 

By the definition of $A_U$, and because $I_0'\cap U=\emptyset$ and $I_0''\subset U$, for any $i_0'\in I_0'$ and $i_0''\in I_0''$ we can write 

$$\Ba_{i_0'}(A_U) \cdot \By  = \sum_{j=1}^n a_{i_0'j}u_jy_j, \mbox{ and } \Ba_{i_0''}(A_U) \cdot \By = \sum_{j=1}^n a_{i_0''j}(1-u_j)y_j.$$

Also, because $i\notin U$, we have

$$\Ba_{i}(A_U)\cdot \By = \sum_{j=1}^n a_{ij}u_jy_j.$$

Thus, 

\begin{align*}
&\quad k\Ba_i(A_U) \cdot \By + \sum_{i_0\in I_0} k_{ii_0} \Ba_{i_0}(A_U) \cdot \By \\
& =  k\Ba_i(A_U) \cdot \By +  \sum_{i_0'\in I_0'} k_{ii_0'} \Ba_{i_0'}(A_U) \cdot \By + \sum_{i_0''\in I_0''} k_{ii_0''} \Ba_{i_0''}(A_U)\cdot \By \\ 
&= \sum_{j=1}^n ka_{ij} u_jy_j + \sum_{j=1}^n \sum_{i_0'\in I_0'} k_{ii_0'} a_{i_0'j} u_jy_j +  \sum_{j=1}^n \sum_{i_0''\in I_0''} k_{ii_0''} a_{i_0''j} (1-u_j)y_j\\
&= \sum_{j=1}^n (ka_{ij} + \sum_{i_0'\in I_0'} k_{ii_0'} a_{i_0'j}- \sum_{i_0''\in I_0''} k_{ii_0''} a_{i_0''j} ) u_jy_j +  \sum_{j=1}^n \sum_{i_0''\in I_0''} k_{ii_0''} a_{i_0''j} y_j.
\end{align*}

Next, by Lemma \ref{lemma:quadratic:row}, the following holds for each $U\in \mathcal{U}_{I_0',I_0''}$

$$\P_\By\Big (\|k \Ba_i(A_U)\cdot \By  + \sum_{i_0\in I_0} k_{ii_0} \Ba_{i_0}(A_U) \cdot \By \|_2 = O(\beta n^{O_{B,\ep}(1)})\Big)=n^{-O_{B,\ep}(1)}.$$ 

Also, recall that 

$$|\mathcal{U}_{I_0',I_0''}|= 2^n/n^{O_{B,\ep}(1)}.$$ 

Hence, 

$$\E_\By\E_U \Big(\|k\Ba_i(A_U) \cdot \By  + \sum_{i_0\in I_0} k_{ii_0} \Ba_{i_0}(A_U) \cdot \By \|_2 =O(\beta n^{O_{B,\ep}(1)})\Big) \ge n^{-O_{B,\ep}(1)}.$$

By applying the Cauchy-Schwarz inequality, we obtain 

\begin{align}\label{eqn:z}
&n^{-O_{B,\ep}(1)}\le \Big[\E_\By \E_U(\|k\Ba_i(A_U)\cdot\By + \sum_{i_0\in I_0} k_{ii_0} \Ba_{i_0}(A_U)\cdot \By \|_2 =O(\beta n^{O_{B,\ep}(1)}))\Big]^2 \nonumber \\
&\le  \E_\By \Big[\E_U(\|k \Ba_i(A_U) \cdot \By  + \sum_{i_0\in I_0}  k_{ii_0} \Ba_{i_0}(A_U)\cdot \By \|_2 =O(\beta n^{O_{B,\ep}(1)}))\Big]^2 \nonumber \\
&= \E_\By \Big[\E_{\Bu}(\|\sum_{j=1}^n (ka_{ij}+\sum_{i_0'\in I_0'} k_{ii_0'} a_{i_0'j}-\sum_{i_0''\in I_0''} k_{ii_0''} a_{i_0''j}) u_jy_j+  \sum_{j=1}^n \sum_{i_0''\in I_0''} k_{ii_0''} a_{i_0''j} y_j\|_2= O(\beta n^{O_{B,\ep}(1)}))\Big]^2 \nonumber \\
&\le \E_\By \E_{\Bu,\Bu'}\Big(\|\sum_{j=1}^n (k_{ij}a_{ij}+\sum_{i_0'\in I_0'} k_{ii_0'} a_{i_0'j}-\sum_{i_0''\in I_0''} k_{ii_0''} a_{i_0''j}) (u_j-u_j')y_j\|_2= O(\beta n^{O_{B,\ep}(1)})\Big) \nonumber \\
&= \E_\Bz\Big(\|\sum_{j=1}^n (ka_{ij}+\sum_{i_0'\in I_0'} k_{ii_0'} a_{i_0'j}-\sum_{i_0''\in I_0''} k_{ii_0''} a_{i_0''j})z_j\|_2 =O(\beta n^{O_{B,\ep}(1)})\Big), 
\end{align}

where  $z_j:=(u_j-u_j')y_j$, and in the last inequality we used the fact that 

$$\E_{\Bu,\Bu'}\Big(\|f(\Bu)\|_2=O(\beta n^{O_{B,\ep}(1)}),\|f(\Bu')\|_2=O(\beta n^{O_{B,\ep}(1)})\Big) \le \E_{\Bu,\Bu'}\Big(\|f(\Bu)-f(\Bu')\|_2=O(\beta n^{O_{B,\ep}(1)})\Big).$$

Note that $u_j-u_j'$ are iid copies of the Bernoulli random variable $2\eta^{(1/2)}$. Hence $z_j$ are iid copies of $2\eta^{(1/2)}(\xi-\xi')$, where $\eta^{(1/2)}$ is independent of $\xi$ and $\xi'$.

In conclusion, the following holds for any $i\in I$,

$$\P_\Bz\Big(\|\sum_{j=1}^n (ka_{ij}+\sum_{i_0'\in I_0'} k_{ii_0'} a_{i_0'j}-\sum_{i_0''\in I_0''} k_{ii_0''} a_{i_0''j})z_j\|_2 =O(\beta n^{O_{B,\ep}(1)})\Big) \ge n^{-O_{B,\ep}(1)}.$$

Note that $k$ and $I_0$ are independent of the choice of $i$. By changing the sign of $k_{ii_0''}$, we are done with the proof of Theorem \ref{theorem:ILOquadratic:continuous}.

\appendix

\section{Proof of Lemma \ref{lemma:decoupling}}\label{appendix:decoupling}
The goal of this section is to establish the inequality

$$\P_{\Bv,\Bw}\Big(\|\sum_{i,j} A_U(ij)v_iw_j\|_2=O_{B}(\beta \sqrt{\log n})\Big) \ge \frac{1}{2}\rho^8/((2\pi)^{7d/2}\exp(8\pi)),$$

under the assumption 

$$\sup_{a,b_1,\dots,b_n} \P_{\Bx} \Big(\|\sum_{i,j}a_{ij}x_ix_j+\sum_ib_ix_i-a|\le \beta\Big)=\rho \ge n^{-B}.$$

Set $a_{ij}':=a_{ij}/\beta$. We have

\begin{align*}
\sup_{a',b_i'} \P_{\Bx} \Big(\|\sum_{i,j}a_{ij}'x_ix_j+\sum_i b_i'x_i-a'\|_2\le 1\Big) \ge n^{-B}.
\end{align*}

Next, by Markov's inequality

\begin{align*}
\P_\Bx\Big(\|\sum_{i,j}a_{ij}'x_ix_j+\sum_i b_i'x_i-a'\|_2\le 1\Big) &= \P\Big(\exp(-\frac{\pi}{2}\|\sum_{i,j}a_{ij}'x_ix_j+\sum_i a_i'x_i-a'\|_2^2 \ge \exp(-\frac{\pi}{2} )\Big)\\
& \le \exp(\frac{\pi}{2} ) \E_\Bx \exp\Big(-\frac{\pi}{2}\|\sum_{i,j}a_{ij}'x_ix_j+\sum_ib_i'x_i-a'\|_2^2\Big).
\end{align*} 

Note that 

$$\exp(-\frac{\pi}{2} \|x\|_2^2) =\int_{\R^d} e(x \cdot t) \exp(-\frac{\pi}{2} \|t\|_2^2) dt.$$

 Thus

$$\P_\Bx\Big(\|\sum_{i,j}a_{ij}'x_ix_j+\sum_i b_i'x_i-a'\|_2\le 1\Big) \le \exp(\frac{\pi}{2}) \int_{\R^d} \Big|\E_\Bx e[(\sum_{i,j}a_{ij}'x_ix_j+\sum_ib_i'x_i)\cdot t]\Big| \exp(-\frac{\pi}{2} \|t\|^2)dt$$
$$\le \exp(\frac{\pi}{2})(\sqrt{2\pi})^d \int_{\R^d} \Big|\E_\Bx e[(\sum_{i,j}a_{ij}'x_ix_j+\sum_ib_i'x_i))\cdot t]\Big| \exp(-\frac{\pi}{2} \|t\|_2^2)/(\sqrt{2\pi})^d dt.$$

Consider $\Bx$ as $(\Bx_U,\Bx_{\bar{U}})$, where $\Bx_U, \Bx_{\bar{U}}$ are the vectors corresponding to $i\in U$ and  $i\notin U$ respectively. By the Cauchy-Schwarz inequality we have

\begin{align*}  
&\quad\Big[\int_{\R^d} \big|\E_\Bx e((\sum_{i,j}a_{ij}'x_ix_j+\sum_i b_i'x_i)\cdot t) \big| \exp(-\frac{\pi}{2} \|t\|_2^2)/(\sqrt{2\pi})^d dt\Big]^4 \\
&\le \Big[\int_{\R^d}  \big|\E_\Bx e((\sum_{i,j}a_{ij}'x_ix_j+\sum_ib_i'x_i))\cdot t)\big|^2 \exp(-\frac{\pi}{2} \|t\|_2^2)/(\sqrt{2\pi})^d dt \Big]^2\\
&\le \Big[\int_{\R^d} \E_{\Bx_U}\big|\E_{\Bx_{\bar{U}}}e((\sum_{i,j} a_{ij}'x_ix_j+\sum_ib_i'x_i))\cdot t)\big|^2 \exp(-\frac{\pi}{2} \|t\|_2^2)/(\sqrt{2\pi})^d dt\Big]^2 \\ 
&=\Big[ \int_{\R^d} \E_{\Bx_U}\E_{\Bx_{\bar{U}},\Bx_{\bar{U}}'} e\Big(\big(\sum_{i\in U,j\in \bar{U}}a_{ij}'x_i(x_j-x_j')+\sum_{j\in \bar{U}} b_j'(x_j-x_j')  \\
&+\sum_{i\in \bar{U},j\in \bar{U}}a_{ij}'(x_ix_j-x_i'x_j')\big)\cdot t\Big) \exp(-\frac{\pi}{2} \|t\|_2^2)/(\sqrt{2\pi})^d  dt\Big]^2\\
&\le \int_{\R^d} \E_{\Bx_{\bar{U}},\Bx_{\bar{U}}'}\Big|\E_{\Bx_{U}} e\Big(\big(\sum_{i\in U,j\in \bar{U}}a_{ij}'x_i(x_j-x_j')+\sum_{j\in \bar{U}} b_j'(x_j-x_j')  \\
&+\sum_{i\in \bar{U},j\in \bar{U}}a_{ij}'(x_ix_j-x_i'x_j')\big)\cdot t\Big)
\Big|^2 \exp(-\frac{\pi}{2} \|t\|_2^2)/(\sqrt{2\pi})^d dt\\
&=\int_{\R^d} \E_{\Bx_U,\Bx_U',\Bx_{\bar{U}},\Bx_{\bar{U}}'} e(\big(\sum_{i\in U, j\in \bar{U}}a_{ij}'(x_i-x_i')(x_j-x_j')\big)\cdot t\Big) \exp(-\frac{\pi}{2} \|t\|_2^2)/(\sqrt{2\pi})^d dt\\
&=\int_{\R^d}\E_{\By_{U},\Bz_{\bar{U}}}e\Big((\sum_{i\in \bar{U},j\in U} a_{ij}'y_iz_j)t\Big)\exp(-\frac{\pi}{2} \|t\|_2^2)/(\sqrt{2\pi})^d dt,
\end{align*}

where $\By_{U}=\Bx_{U}-\Bx_{U}'$ and $\Bz_{\bar{U}}=\Bx_{\bar{U}}-\Bx_{\bar{U}}'$, whose entries are iid copies of $\xi-\xi'$.

Thus we have 

\begin{align*}
&\Big[\int_{\R^d} \big|\E_\Bx e((\sum_{i,j}a_{ij}'x_ix_j)\cdot t)\big|(\exp(-\frac{\pi}{2} \|t\|_2^2)/(\sqrt{2\pi})^d dt\Big]^8\\
&\le \Big[\int_{\R^d}\E_{\By_U,\Bz_{\bar{U}}}e\big((\sum_{i\in U,j\in \bar{U}} a_{ij}'y_iz_j)\cdot t\big) (\exp(-\frac{\pi}{2} \|t\|_2^2)/(\sqrt{2\pi})^d dt\Big]^2\\
&\le \int_{\R^d}\E_{\By_U,\Bz_{\bar{U}}, \By_U', \Bz_{\bar{U}}'}e\Big((\sum_{i\in U,j\in \bar{U}}a_{ij}'y_iz_j-\sum_{i\in U, j\in \bar{U}} a_{ij}'y_i' z_j')\cdot t\Big)\exp(-\frac{\pi}{2} \|t\|_2^2)/(\sqrt{2\pi})^d dt.
\end{align*}

Because $a_{ij}'=a_{ji}'$, we can write the last term as

\begin{align*}
&\int_{\R^d}\E_{\By_U,\Bz_{\bar{U}}', \By_U', \Bz_{\bar{U}}}e\Big(\big(\sum_{i\in U,j\in \bar{U}}a_{ij}'y_iz_j+\sum_{j\in \bar{U}, i\in U} a_{ji}(-z_j')y_i'\big)\cdot t\Big)\exp(-\frac{\pi}{2} \|t\|_2^2)/(\sqrt{2\pi})^d dt\\
&=\int_{\R^d} \E_{\Bv,\Bw}e\Big(( \sum_{i\in U, j\in \bar{U}}a_{ij}'v_iw_j + \sum_{i\in \bar{U}, j\in U} a_{ij}'v_iw_j)\cdot t\Big)\exp(-\frac{\pi}{2} \|t\|_2^2)/(\sqrt{2\pi})^d dt,
\end{align*}

where $\Bv:=(\By_U,-\Bz_{\bar{U}}')$ and $\Bw:=(\By_U', \Bz_{\bar{U}})$. 

Next, reacall that $A_U(ij)=a_{ij}$ if either $i\in U,j\notin U$ or $i\notin U, j\in U$, we have

$$\int_{\R^d} \E_{\Bv,\Bw}e\Big(\big( \sum_{i\in U j\in \bar{U}}a_{ij}'v_iw_j + \sum_{i\in \bar{U}, j\in U} a_{ij}'v_iw_j\big)t\Big)\exp(-\frac{\pi}{2} \|t\|_2^2)/(\sqrt{2\pi})^d dt$$ 
$$= (1/\sqrt{2\pi})^d \E_{\Bv,\Bw}\exp(-\frac{\pi}{2}\|\sum_{i,j}A_U(ij)'v_iw_j\|_2^2),$$

where $A_U(ij)':=A_U(ij)/\beta$.

Thus 

\begin{align*}
\rho^8 &= \Big(\P_\Bx(|\sum_{i,j}a_{ij}'x_i,x_j+\sum_i b_i'x_i -a'|\le 1)\Big)^8 \\
&\le \exp(4\pi)(2\pi)^{4d} \Big(\int_{\R^d} |\E_\Bx e((\sum_{i,j}a_{ij}'x_ix_j)\cdot t)| (\exp(-\frac{\pi}{2} \|t\|_2^2)/(\sqrt{2\pi})^d dt \Big)^8\\
&\le \exp(4\pi)(2\pi)^{7d/2} \E_{\Bv,\Bw}\exp(-\frac{\pi}{2}\|\sum_{i,j}A_U(ij)'v_iw_j\|_2^2).
\end{align*}

Because $\rho\ge n^{-B}$, the inequality above implies that 

$$\P_{\Bv,\Bw}\Big(\|\sum_{i,j} A_U(ij)'v_iw_j\|_2=O_{B}(\sqrt{\log n})\Big) \ge \frac{1}{2} \rho^8/((2\pi)^{7d/2}\exp(4\pi)).$$

Scaling back to $A_{ij}$, we obtain

$$\P_{\Bv,\Bw}\Big(\|\sum_{i,j} A_U(ij)v_iw_j\|_2=O_{B}(\beta \sqrt{\log n})\Big) \ge \frac{1}{2}\rho^8/((2\pi)^{7d/2}\exp(4\pi)),$$

completing the proof.

%{\bf Acknowledgements.} The author would like to thank R. Pemantle and V. Vu for thoughtful discussions and comments.                          

\end{document}